\font\smallit=cmti10
\font\smalltt=cmtt10
\renewcommand\section{\@startsection {section}{1}{\z@}
{-30pt \@plus -1ex \@minus -.2ex}
{2.3ex \@plus.2ex}
{\normalfont\normalsize\bfseries\boldmath}}
\renewcommand\subsection{\@startsection{subsection}{2}{\z@}
{-3.25ex\@plus -1ex \@minus -.2ex}
{1.5ex \@plus .2ex}
{\normalfont\normalsize\bfseries\boldmath}}
\renewcommand{\@seccntformat}[1]{\csname the#1\endcsname. }
\newtheorem{theorem}{Theorem}
\newtheorem{lemma}{Lemma}
\theoremstyle{definition}
\newtheorem{defn}{Definition}[section]
\newtheorem{remark}[theorem]{Remark}
\begin{document}

\begin{center}
\uppercase{\bf Impartial and Partizan Restricted Chocolate Bar Games}
\vskip 20pt
{\bf Ryohei Miyadera }\\
{\smallit Keimei Gakuin Junior and High School, Kobe City, Japan}\\
{\tt runnerskg@gmail.com}
\vskip 10pt
{\bf Shoei Takahashi}\\
{\smallit Keimei Gakuin Junior and High School, Kobe City, Japan}\\
{\tt laptop.syouei@gmail.com}
\vskip 10pt
{\bf Aoi Murakami}\\
{\smallit Keimei Gakuin Junior and High School, Kobe City, Japan}\\
{\tt atatpj728786.55986@gmail.com}
\vskip 10pt
{\bf Akito Tsujii}\\
{\smallit Keimei Gakuin Junior and High School, Kobe City, Japan}\\
{\tt urakihebanam@gmail.com}
\vskip 10pt
{\bf Hikaru Manabe}\\
{\smallit Keimei Gakuin Junior and High School, Kobe City, Japan}\\
{\tt urakihebanam@gmail.com}

\end{center}
\vskip 20pt
\centerline{\smallit Received: , Revised: , Accepted: , Published: } 
\vskip 30pt


\centerline{\bf Abstract}
\noindent
In this paper, we consider impartial and partizan restricted chocolate bar games.
In impartial restricted chocolate bar games, players cut a chocolate bar into two pieces along any horizontal or vertical line and eat  whichever piece is smaller. If the two pieces are the same size, a player can eat either one.

In constrast, partizan restricted chocolate bar games include players designated as Left and Right and chocolate bars with black and white stripes. Left cuts the chocolate bar in two as above and eats the part with fewer black blocks. 
Similarly, Right cuts the bar and eats the part with fewer white blocks. A player loses when they cannot eat the remaining chocolate bar. We provide formulas that describe the winning positions of the previous player, Right, and Left players. We also present an interesting similarity in the graphs of previous players' winning positions for impartial and partizan chocolate bar  games.

\pagestyle{myheadings} 
\markright{\smalltt   (  )\hfill} 
\thispagestyle{empty} 
\baselineskip=12.875pt 
\vskip 30pt


\section{Introduction}
The classic game of Nim is played with stones arranged in discrete heaps or piles. A player can remove any number of stones from any one pile during their turn, and the player who takes the last stone is considered the winner. 

 Given its simplicity and importance in combinatorial game theory, many variant forms of Nim have been devised.

 In Section \ref{maxnim}, we discuss a version of Maximum Nim with an upper bound $f(n)$ on the number of stones that can be removed in terms of the number of stones $n$ in a given pile (see \cite{levinenim}). In this study, we consider the case in which $f(n)= \left\lfloor \frac{n}{2}\right\rfloor $. 
 
 In Section \ref{chocosize}, we explore variants of the game Chomp \cite{gale}. Robin presented the first two-dimensional chocolate bar which used  a rectangular array of squares with a single black square representing a bitter piece \cite{robin}. The two  players take turns breaking the bar into two pieces along a straight line representing the grooves in a chocolate bar
  and eat the piece without the bitter part. The player that leaves their opponent with the single ``bitter" or black block wins. 
This is combinatorially equivalent to game of Nim with two heaps; for example, the chocolate bar game shown in Figure \ref{chocoof5and3} is eqivalent to the classical Nim game with two piles shown in Figure \ref{nimof5and3}.
We have investigated many variants of such chocolate bar games in prior works  \cite{integer2015}, \cite{integers2020}, 
including some with two-dimensional chocolate bars with irregular shapes (Figure \ref{choco2511b}).
We have also provided some for three-dimensional chocolate bars. See \cite{integer2021}.

\begin{minipage}[t]{0.22\textwidth}
\begin{center}
\begin{figure}[H]
\includegraphics[height=0.9cm]{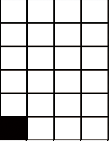}
\caption{Original chocolate bar\\ with a bitter part}\label{chocoof5and3}
\end{figure}
\end{center}
\end{minipage}
\hfill
\begin{minipage}[t]{0.35\textwidth}
\begin{center}
\begin{figure}[H]
\includegraphics[height=0.8cm]{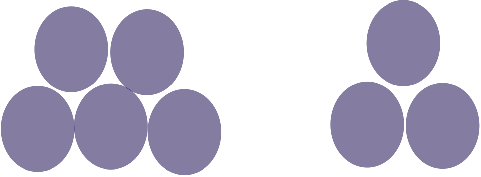}
\caption{Traditional \\ Nim with  two piles}\label{nimof5and3}
\end{figure}
\end{center}
\end{minipage}
\begin{minipage}[t]{0.35\textwidth}
\begin{center}
\begin{figure}[H]
\includegraphics[height=0.9cm]{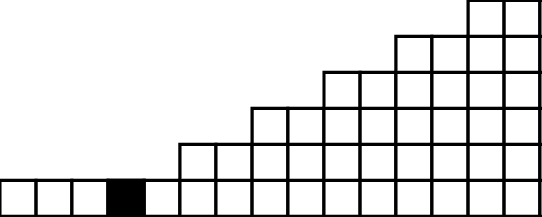}
\caption{Chocolate bar \\with a bitter part, \\ where some squares \\ are removed }\label{choco2511b}
\end{figure}

\end{center}
\end{minipage}

In the present work, we consider a rectangular chocolate bar game with restrictions on the size of the piece to be eaten without a bitter part (Figure \ref{choco2511c}).

Although we have described some characteristics of impartial restricted chocolate bar games in a prior work, we 
present a simplified version of the theory here \cite{thaij2023a}.

These results on impartial restricted chocolate bar games are relevant to our investigation of partizan restricted chocolate bar  games owing to some interesting similarities and differences of note.

\begin{figure}[H]
\begin{center}
\includegraphics[height=1.cm]{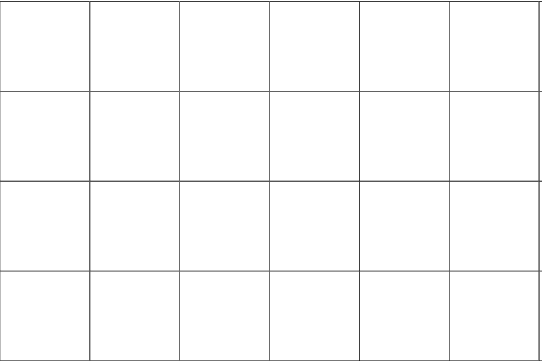}
\caption{Chocolate bar without any bitter part}\label{choco2511c}
\end{center}
\end{figure}

Given this restriction, we consider a chocolate bar game equivalent to the sum of two Maximum Nim. Hence, results that have been established for Maximum Nim can be applied to this chocolate bar game.

In Section \ref{partizan1}, we explore some characteristics of partizan restricted chocolate bar games as a natural generalization of impartial restricted games.

These involve two players designated as Left and Right, with chocolate bars with black and white stripes. Left cuts the  bar  into two pieces and eats that with fewer black blocks. Similarly, Right cuts the bar and eats the piece with fewer white blocks. 
A player loses when they cannot eat the remaining piece. We provide formulas that describe the previous, Right, and Left players' winning positions. In particular, we discuss an interesting similarity in the graphs of previous players' winning positions for impartial and partizan chocolate bar  games.


We briefly review some necessary concepts in combinatorial game theory (see \cite{lesson} for more details). 
We denote the sets of non-negative integers and natural numbers as $\mathbb{Z}_{\ge 0}$ and  $\mathbb{N}$, respectively.
Here, we consider impartial games with no draws, which admit of only two outcome classes. 

$(a)$ A position is called a $\mathcal{P}$-\textit{position} if it is a winning position for the previous player (the player who just moved), as long as he/she plays correctly at every stage.\\
$(b)$ A position is called an $\mathcal{N}$-\textit{position} if it is a winning position for the next player, as long as he/she plays correctly at every stage.

In combinatorial game theory, the Nim-sum  indicates addition without carryovers for numbers expressed in base $2$, which is identical to the logical operation of exclusive or.

Let $x$, $y$ $\in Z_{\ge 0}$. We represent $x = \sum_{i=0}^n x_i 2^i$ and $y = \sum_{i=0}^n y_i 2^i$ with $x_i,y_i \in \{0,1\}$.
We define the Nim-sum $x \oplus y$ as
$x \oplus y = \sum\limits_{i = 0}^n {{w_i}} {2^i}, \text{ where }w_{i}=x_{i}+y_{i} \ ( \mathrm{mod} \ 2).$

The Grundy number is another important tool in combinatorial game theory. To express this for the game considered here, we need to define the ``move" that the players make on successive turns.\\
$(i)$ For any position $\mathbf{p}$ of a game $\mathbf{G}$, there exists a set of positions that can be reached by making precisely one move in $\mathbf{G}$, which is denoted by \textit{move}$(\mathbf{p})$. \\
$(ii)$ The \textit{minimum excluded value} ($\textit{mex}$) of a set $S$ of non-negative integers is the least non-negative integer, not in S. \\
$(iii)$ Let $\mathbf{p}$ be the current position in an impartial game. The associated Grundy number is denoted by $\mathcal{G}(\mathbf{p})$ and is
recursively defined by $\mathcal{G}(\mathbf{p}) = \textit{mex}\{\mathcal{G}(\mathbf{h}): \mathbf{h} \in move(\mathbf{p})\}.$

The Grundy number of a position is important for two reasons. It is related to the $\mathcal{P}$-position presented in Theorem \ref{theoremofsumga}, and can also applied to the sum of multiple games.

\begin{theorem}\label{theoremofsumga}
Let $\mathcal{G}$ represent the Grundy number of the combinatorial game $\mathbf{G}$.
Then, for any position $\mathbf{p}$ of $\mathbf{G}$, we have
$\mathcal{G}(\mathbf{p})=0$ if and only if $\mathbf{p}$ represents a $\mathcal{P}$-position.
\end{theorem}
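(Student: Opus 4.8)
The plan is to prove both directions at once by well-founded induction on the position $\mathbf{p}$. The first thing I would make explicit is that the game under consideration is finite: every sequence of legal moves terminates, so the relation ``$\mathbf{h}\in move(\mathbf{p})$'' generates a well-founded order on positions. This is what licenses the induction: I may assume the equivalence between $\mathcal{G}(\mathbf{q})=0$ and $\mathbf{q}$ being a $\mathcal{P}$-position for every $\mathbf{q}$ reachable from $\mathbf{p}$ by one or more moves, and then establish it for $\mathbf{p}$ itself. It is convenient to carry along the complementary statement ``$\mathcal{G}(\mathbf{q})\neq 0$ iff $\mathbf{q}$ is an $\mathcal{N}$-position'', which is equivalent to the first because every position lies in exactly one of the two outcome classes (there are no draws).

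For the base case I would take a terminal position $\mathbf{p}$, i.e.\ one with $move(\mathbf{p})=\emptyset$. Then $\mathcal{G}(\mathbf{p})=\textit{mex}(\emptyset)=0$, and at the same time the player to move has no move and therefore loses, so $\mathbf{p}$ is by definition a $\mathcal{P}$-position; the two sides of the claimed equivalence agree.

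For the inductive step, assume $move(\mathbf{p})\neq\emptyset$ and that the claim holds at every $\mathbf{h}\in move(\mathbf{p})$, and split on the value $\mathcal{G}(\mathbf{p})$. If $\mathcal{G}(\mathbf{p})=0$, then by the definition of $\textit{mex}$ no successor has Grundy value $0$, so by the induction hypothesis every $\mathbf{h}\in move(\mathbf{p})$ is an $\mathcal{N}$-position; whatever move the next player makes from $\mathbf{p}$, the resulting position is a win for the player to move there, namely the original previous player, so $\mathbf{p}$ is a $\mathcal{P}$-position. Conversely, if $\mathcal{G}(\mathbf{p})\neq 0$, then since $\textit{mex}$ returns the least missing non-negative integer, $0$ belongs to $\{\mathcal{G}(\mathbf{h}):\mathbf{h}\in move(\mathbf{p})\}$, so there is a successor $\mathbf{h}_0$ with $\mathcal{G}(\mathbf{h}_0)=0$, which by the induction hypothesis is a $\mathcal{P}$-position; the next player moves to $\mathbf{h}_0$ and wins, so $\mathbf{p}$ is an $\mathcal{N}$-position, hence not a $\mathcal{P}$-position. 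Combining the base case with both directions of the inductive step closes the induction and gives the theorem.

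I expect the only real subtlety to be foundational bookkeeping rather than computation: one must record that the game terminates so that the induction is well-founded, and one must use that ``$\mathbf{p}$ is a $\mathcal{P}$-position'' is genuinely the negation of ``$\mathbf{p}$ is an $\mathcal{N}$-position'' here, which is exactly the no-draws hypothesis stated just before the theorem. Everything else is a direct unwinding of the definitions of $\textit{mex}$ and of the two outcome classes.
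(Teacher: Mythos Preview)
Your argument is correct and is the standard well-founded induction proof of this classical fact. The paper itself does not give a proof but simply cites \cite{lesson}; the argument found there is precisely the one you have written, so your proposal supplies what the paper omits.
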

For the proof, please see \cite{lesson}.

The sum of two games is an important concept in combinatorial game theory.

If $\mathbf{G_{1}}$ and $\mathbf{G_{2}}$ are combinatorial games, their sum, denoted by $\mathbf{G_{1}}+\mathbf{G_{2}}$,
represents a game in which each player alternately chooses one among $\mathbf{G_{1}}$ and $\mathbf{G_{2}}$, 
and plays a move in the selected game; the player who cannot play loses the game. This scenario occurs only when $\mathbf{G_{1}}$ and $\mathbf{G_{2}}$ have reached a terminal position.

There is a simple relation for the Grundy number of the sum of two games.

Let $\mathcal{G}_{1}$ and $\mathcal{G}_{2}$ represent the Grundy numbers of $\mathbf{G_{1}}$ and $\mathbf{G_{2}}$.
Then, the Grundy number of a position $\{\mathbf{g},\mathbf{h}\}$ in the game $\mathbf{G_{1}}+\mathbf{G_{2}}$ is
$\mathcal{G}_{1}(\mathbf{g})\oplus \mathcal{G}_{1}(\mathbf{h})$.

Given this background,
the authors present new results from this point forward.

\section{Maximum Nim}\label{maxnim}   
We consider maximum Nim as follows.

Suppose there is a pile of $n$ stones, and two players take turns removing stones from the pile.
At each turn, the player is allowed to remove at least one and at most 
$f(m)$ stones if the number of remaining stones is $m$. The player who removes the last stone or stones is the winner. 
Here, $f(m)$ represents a function whose values are non-negative integers for $m \in Z_{\ge 0}$ such that 
$0 \leq f(m) -f(m-1) \leq 1$ for any natural number $m$. We refer to $f$ as the rule function.

\begin{lemma}\label{lemmabylevinenim}
Let $\mathcal{G}$ represent the Grundy number of the maximum Nim with the rule function $f(x)$. Then, we have the following properties:\\
$(i)$ If $f(x) = f(x-1)$, $\mathcal{G}(x) = \mathcal{G}(x-f(x)-1)$.\\
$(ii)$ If $f(x) > f(x-1)$, $\mathcal{G}(x) = f(x)$.
\end{lemma}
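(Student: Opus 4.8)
The plan is to prove the two cases by induction on $x$, using the recursive definition $\mathcal{G}(x) = \textit{mex}\{\mathcal{G}(x-1), \mathcal{G}(x-2), \dots, \mathcal{G}(x-f(x))\}$, which holds because from a pile of $x$ stones a player may remove any number $k$ with $1 \le k \le f(x)$, landing on the pile of $x-k$ stones. The base case $x=0$ gives $\mathcal{G}(0)=0$ (no moves), and this agrees with both clauses read appropriately. For the inductive step I would treat the two cases in a mutually supporting induction, since case $(i)$ at $x$ will typically need to invoke values $\mathcal{G}(y)$ for $y<x$ whose form is governed by whichever clause applies to $y$.

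For case $(ii)$, suppose $f(x) > f(x-1)$, so $f(x) = f(x-1)+1$ by the hypothesis $0 \le f(m)-f(m-1) \le 1$. I want $\mathcal{G}(x) = f(x)$. The reachable set is $\{\mathcal{G}(x-1), \dots, \mathcal{G}(x-f(x))\}$. First I would argue that $\{\mathcal{G}(x-1),\dots,\mathcal{G}(x-f(x))\} \supseteq \{0,1,\dots,f(x)-1\}$: the $f(x)$ positions $x-1,\dots,x-f(x)$ are precisely the positions reachable, and I would show by a secondary induction (or by directly tracking how $\mathcal{G}$ behaves on a maximal run where $f$ is constant, then steps up) that their Grundy values realize all of $0,\dots,f(x)-1$. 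The cleanest route is to show the stronger invariant that for every $x$, $\mathcal{G}(x) \le f(x)$ and $\{\mathcal{G}(x-f(x)),\dots,\mathcal{G}(x-1),\mathcal{G}(x)\} = \{0,1,\dots,f(x)\}$ when $f(x)>f(x-1)$ — essentially that the Grundy values cycle through $0,1,\dots$ on each plateau of $f$. Given that, $\textit{mex}$ of a set containing exactly $\{0,\dots,f(x)-1\}$ (and no larger value, since I also need $f(x)\notin$ the set, which follows from $\mathcal{G}(y)\le f(y)\le f(x-1) = f(x)-1$ for each $y \in \{x-1,\dots,x-f(x)\}$) is exactly $f(x)$.

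For case $(i)$, suppose $f(x)=f(x-1)$. The reachable set from $x$ is $\{\mathcal{G}(x-1),\dots,\mathcal{G}(x-f(x))\}$, and from $x-1$ (whose reachable set, since $f(x-1)=f(x)$, is $\{\mathcal{G}(x-2),\dots,\mathcal{G}(x-1-f(x))\}$). The key observation is that $\textit{move}(x)$ and $\textit{move}(x-1)$ overlap in the $f(x)-1$ positions $x-2,\dots,x-f(x)$; the set for $x$ drops the element $\mathcal{G}(x-1-f(x))$ and adds $\mathcal{G}(x-1)$. I would show $\mathcal{G}(x) = \mathcal{G}(x-1-f(x)) = \mathcal{G}(x-f(x)-1)$ by checking that $\mathcal{G}(x-1-f(x))$ is exactly the value missing from $\{\mathcal{G}(x-1),\dots,\mathcal{G}(x-f(x))\}$ when we compute its $\textit{mex}$: since $\mathcal{G}(x-1) = \textit{mex}\{\mathcal{G}(x-2),\dots,\mathcal{G}(x-1-f(x))\}$, the value $\mathcal{G}(x-1)$ is absent from $\{\mathcal{G}(x-2),\dots,\mathcal{G}(x-1-f(x))\}$, so adding it back and removing $\mathcal{G}(x-1-f(x))$ keeps the $\textit{mex}$ pinned at... — here one must verify carefully that $\mathcal{G}(x-1-f(x))$ is precisely the new $\textit{mex}$, which is where the plateau-cycling invariant from case $(ii)$ is used again.

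The main obstacle I anticipate is bookkeeping the interaction between the two clauses: a clean proof really wants a single strengthened inductive hypothesis — something like "$\mathcal{G}$ restricted to each maximal interval on which $f$ is constant is periodic with period $f+1$ and runs through $0,1,\dots,f$ in order, resetting appropriately at each step-up of $f$" — from which both $(i)$ and $(ii)$ fall out. Formulating that invariant precisely (especially handling the boundary between consecutive plateaus, and the initial segment near $x=0$) and then verifying the $\textit{mex}$ computations is the crux; the individual $\textit{mex}$ evaluations are routine once the invariant is in place. (This is essentially the argument of Levine; see \cite{levinenim}.)
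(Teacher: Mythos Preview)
The paper does not prove this lemma at all: it simply cites it as Lemma~2.1 of \cite{levinenim}. Your sketch, by contrast, outlines an actual argument, and the approach you describe --- a sliding-window $\textit{mex}$ computation driven by a strengthened invariant asserting that $\{\mathcal{G}(x-f(x)),\dots,\mathcal{G}(x)\}=\{0,1,\dots,f(x)\}$ on every $x$ --- is exactly Levine's method, so in that sense you are aligned with what the paper defers to.

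Your outline is sound, but note that the place where you trail off in case~$(i)$ is genuinely the crux: showing that removing $\mathcal{G}(x-f(x)-1)$ from the window and inserting $\mathcal{G}(x-1)$ forces the new $\textit{mex}$ to be exactly $\mathcal{G}(x-f(x)-1)$ does not follow from the two displayed observations alone; it needs the full invariant that the $f(x)+1$ consecutive Grundy values form a permutation of $\{0,\dots,f(x)\}$. Once that invariant is carried through the induction (checking both the plateau step $f(x)=f(x-1)$ and the step-up $f(x)=f(x-1)+1$), both clauses drop out immediately, so your instinct to promote it to the main inductive hypothesis is the right move.
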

This is Lemma 2.1 of \cite{levinenim}.

In this paper, we assume that $f(m)= \left\lfloor \frac{m}{2}\right\rfloor $, where $ \left\lfloor x \right\rfloor$ represent the greatest integer less than or equal to $x$ for
$x \in \mathbb{Z}_{\ge 0}$.

For the maximum Nim of $x$ stones with rule function $f(x)= \left\lfloor \frac{x}{2}\right\rfloor $, \\
$\textit{move}(x)$ $= \{x-u:1 \leq u \leq \left\lfloor \frac{x}{2}\right\rfloor \text{ and } u \in \mathbb{N} \}$.

\begin{lemma}\label{grundylemmaformax}
Let $\mathcal{G}$ represent the Grundy number of the maximum Nim with the rule function $f(m)= \left\lfloor \frac{m}{2}\right\rfloor $. Then, we have the following properties:\\
$(i)$ For an even number $x$ such that $x=2m$ for $m \in \mathbb{Z}_{\ge 0}$, we have 
\begin{equation}
\mathcal{G}(x)  = m. \nonumber
\end{equation}
$(ii)$
For an odd number $x$ such that $x=2^k(2m+1)-1$ for $k \in \mathbb{N}$ and $m \in \mathbb{Z}_{\ge 0}$, we have 
\begin{equation}
\mathcal{G}(x) = m \nonumber.
\end{equation}
\end{lemma}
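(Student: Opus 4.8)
The plan is to prove both parts simultaneously by strong induction on $x$, using Lemma \ref{lemmabylevinenim} to reduce $\mathcal{G}(x)$ to the Grundy value at a smaller argument, and then identifying the residue class of that smaller argument so that the induction hypothesis applies. Throughout, note that for $f(m)=\left\lfloor \frac{m}{2}\right\rfloor$ we have $f(x)>f(x-1)$ precisely when $x$ is even, and $f(x)=f(x-1)$ precisely when $x$ is odd. The base cases $x=0$ (even, $m=0$, $\mathcal{G}(0)=0$) and $x=1$ ($=2^1\cdot 1-1$, so $k=1,m=0$, and indeed $\mathcal{G}(1)=0$ since no move is allowed) are immediate.

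For part $(i)$, let $x=2m$ be even. Then $f(x)=m>f(x-1)=m-1$, so Lemma \ref{lemmabylevinenim}$(ii)$ gives $\mathcal{G}(x)=f(x)=m$ directly — no induction is even needed here. For part $(ii)$, let $x=2^k(2m+1)-1$ be odd with $k\ge 1$. Then $f(x)=f(x-1)$, so Lemma \ref{lemmabylevinenim}$(i)$ gives $\mathcal{G}(x)=\mathcal{G}(x-f(x)-1)$. Now compute $x-f(x)-1$: since $x=2^k(2m+1)-1$ is odd, $f(x)=\left\lfloor \frac{x}{2}\right\rfloor=\frac{x-1}{2}=2^{k-1}(2m+1)-1$, hence
\begin{equation}
x-f(x)-1 = \bigl(2^k(2m+1)-1\bigr)-\bigl(2^{k-1}(2m+1)-1\bigr)-1 = 2^{k-1}(2m+1)-1. \nonumber
\end{equation}
So the recursion sends $2^k(2m+1)-1 \mapsto 2^{k-1}(2m+1)-1$, strictly decreasing the argument while preserving the odd part $2m+1$ and merely lowering the exponent $k$ by one.

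Now split on the value of $k$. If $k\ge 2$, then $2^{k-1}(2m+1)-1$ is again of the required odd form with exponent $k-1\ge 1$ and the same $m$, so the induction hypothesis yields $\mathcal{G}(x-f(x)-1)=m$, whence $\mathcal{G}(x)=m$. If $k=1$, then $x-f(x)-1 = (2m+1)-1 = 2m$, which is even, so by part $(i)$ (already established, or by the induction hypothesis applied to this smaller even number) $\mathcal{G}(2m)=m$, and again $\mathcal{G}(x)=m$. This completes the induction. I do not anticipate a serious obstacle: the only point requiring care is the bookkeeping that every odd number $\ge 1$ has a unique representation $2^k(2m+1)-1$ with $k\ge 1,m\ge 0$ (equivalently, $x+1$ even with $2$-adic valuation $k$), so that parts $(i)$ and $(ii)$ together cover all non-negative integers with no overlap, and checking that the recursion from Lemma \ref{lemmabylevinenim} is being applied to an argument that is genuinely smaller (which it is, since $f(x)+1\ge 1$).
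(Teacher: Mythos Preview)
Your proof is correct and follows essentially the same route as the paper's: both establish part $(i)$ directly from Lemma~\ref{lemmabylevinenim}$(ii)$, and for part $(ii)$ both compute $x-f(x)-1=2^{k-1}(2m+1)-1$ via Lemma~\ref{lemmabylevinenim}$(i)$ and descend on $k$ until reaching the even case $2m$. The only cosmetic difference is that the paper displays the descent as an explicit chain with vertical dots, whereas you package the same descent as a strong induction on $x$.
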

\begin{proof}
$\mathrm{(i)}$
If $x=2m$ for $m \in \mathbb{Z}_{\ge 0}$, $f(x) > f(x-1)$. Therefore, by $(ii)$ of Lemma \ref{lemmabylevinenim}, 
\begin{equation}
\mathcal{G}(2m) = f(2m)= \left\lfloor \frac{2m}{2}\right\rfloor = m. \label{evencase} 
\end{equation}
$\mathrm{(ii)}$ By $(\ref{evencase})$ and $(i)$ of Lemma \ref{lemmabylevinenim}, we have
for $k \in \mathbb{N}$ and $m \in \mathbb{Z}_{\ge 0}$
\begin{align}
 \mathcal{G}(x) & = \mathcal{G}(2^{k}(2m+1)-1) \nonumber \\
& = \mathcal{G}(2^{k}(2m+1)-1- \left\lfloor \frac{2^{k}(2m+1)-1}{2}\right\rfloor -1)  \nonumber \\
& = \mathcal{G}(2^{k}(2m+1)-1- (2^{k-1}(2m+1)-1)) -1)   \nonumber \\
& = \mathcal{G}(2^{k-1}(2m+1)-1)  \nonumber \\
&  \hspace{1cm} \vdots \nonumber \\
& = \mathcal{G}((2m+1)-1) = \mathcal{G}(2m)=m.  \nonumber 
\end{align}	
\end{proof}

\begin{lemma}\label{lemmaforgrundymax}
For any number $n \in \mathbb{Z}_{\ge 0}$
\begin{equation}
\{x:\mathcal{G}(x)=n\}=\{2^k(2n+1)-1:k \in \mathbb{Z}_{\ge 0}\}.\label{grundyset1}
\end{equation}
\end{lemma}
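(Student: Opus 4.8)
The plan is to prove the set equality \eqref{grundyset1} by establishing both inclusions, drawing on the explicit Grundy values computed in Lemma \ref{grundylemmaformax}. First I would observe that every element of the right-hand side has Grundy value $n$: for $k=0$ the element is $2n+1-1=2n$, which is even, so part $(i)$ of Lemma \ref{grundylemmaformax} gives $\mathcal{G}(2n)=n$; for $k\ge 1$ the element $2^k(2n+1)-1$ is odd of exactly the form treated in part $(ii)$ of Lemma \ref{grundylemmaformax} (with the roles of $k$ and $m$ there played by $k$ and $n$ here), so again the Grundy value is $n$. This shows $\{2^k(2n+1)-1:k\in\mathbb{Z}_{\ge 0}\}\subseteq\{x:\mathcal{G}(x)=n\}$.

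For the reverse inclusion, I would take an arbitrary $x$ with $\mathcal{G}(x)=n$ and show it must have the stated form. Split into cases by parity. If $x$ is even, write $x=2m$; then $\mathcal{G}(x)=m$ by part $(i)$ of Lemma \ref{grundylemmaformax}, so $m=n$ and $x=2n=2^0(2n+1)-1$, which lies in the right-hand set with $k=0$. If $x$ is odd, I would use the fact that every odd number can be written uniquely as $x=2^k(2m+1)-1$ for some $k\in\mathbb{N}$ and $m\in\mathbb{Z}_{\ge 0}$ — indeed $x+1$ is even, so $x+1=2^k(2m+1)$ with $k\ge 1$ the $2$-adic valuation of $x+1$ and $2m+1$ the odd part. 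Then part $(ii)$ of Lemma \ref{grundylemmaformax} gives $\mathcal{G}(x)=m$, hence $m=n$ and $x=2^k(2n+1)-1$ with $k\ge 1$, again in the right-hand set. Combining the two cases gives $\{x:\mathcal{G}(x)=n\}\subseteq\{2^k(2n+1)-1:k\in\mathbb{Z}_{\ge 0}\}$, and the two inclusions together yield \eqref{grundyset1}.

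The only real subtlety — and the step I would be most careful about — is the existence-and-uniqueness of the representation $x=2^k(2m+1)-1$ for odd $x$ and the bookkeeping of index ranges: Lemma \ref{grundylemmaformax}$(ii)$ is stated for $k\in\mathbb{N}$ (i.e. $k\ge 1$) while Lemma \ref{lemmaforgrundymax} allows $k\in\mathbb{Z}_{\ge 0}$, so one must check that the $k=0$ term $2n$ is exactly what the even case contributes and that no element is produced twice or omitted. This is routine once phrased in terms of the $2$-adic valuation of $x+1$, but it is where a sloppy argument could go wrong. Everything else is a direct substitution into the already-proven Grundy formula.
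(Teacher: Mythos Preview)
Your proposal is correct and follows essentially the same approach as the paper's own proof: both directions are obtained by direct appeal to the two cases of Lemma~\ref{grundylemmaformax}, with the $k=0$ term accounting for the even case and $k\ge 1$ for the odd case. If anything, your version is slightly more careful than the paper's, since you explicitly justify the representation $x=2^k(2m+1)-1$ for odd $x$ via the $2$-adic valuation of $x+1$, whereas the paper simply asserts that Lemma~\ref{grundylemmaformax} forces $x=2n$ or $x=2^k(2n+1)-1$ without spelling out why every $x$ falls into one of these forms.
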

\begin{proof}
Let $n \in \mathbb{Z}_{\ge 0}$. If $k=0$, $2^k(2n+1)-1=2n$. Then, by $(i)$ of Lemma \ref{grundylemmaformax} we have 
$\mathcal{G}(2n)=n$. If $k \geq 1$, by $(ii)$ of Lemma \ref{grundylemmaformax}, 
$\mathcal{G}(2^k(2n+1)-1) = n$.
Therefore, 
\begin{equation}
\{x:\mathcal{G}(x)=n\} \supset \{2^k(2n+1)-1:k \in \mathbb{Z}_{\ge 0}\}.\label{grundyset2}
\end{equation}
Suppose that $\mathcal{G}(x)=n$ for $n \in \mathbb{N}$. 
By Lemma \ref{grundylemmaformax}, we have
$x = 2n$ or
$x=2^k(2n+1)-1$ for some $k \in \mathbb{N}$. Therefore we have
\begin{equation}
\{x:\mathcal{G}(x)=n\} \subset \{2^k(2n+1)-1:k \in \mathbb{Z}_{\ge 0}\}.\label{grundyset3}
\end{equation}
By (\ref{grundyset2}) and (\ref{grundyset3}) we have (\ref{grundyset1}).
\end{proof}

\section{Chocolate bar games with restrictions on the size of chocolate bar  to be eaten}\label{chocosize}

We define chocolate bar games with restrictions.
\begin{defn}\label{definitionofmaxchoco}
$(i)$ A chocolate bar is a rectangular array of squares. \\
$(ii)$ We describe the position of this chocolate bar as $(x,y)$ when $x$ and $y$ represent the height and the width of the chocolate.
Then, the size of the chocolate bar is calculated by $xy$. \\
$(iii)$ Two players take turns and break the chocolate bar along any one of the horizontal or vertical lines into two pieces and eat the smaller one of the pieces, where players can eat either piece if the pieces are of equal size.\\
$(iv)$ The player who is left with a $1 \times 1$ piece of chocolate and hence cannot make another move loses the game.  
Therefore the $1 \times 1$ piece of chocolate with the coordinates $(1,1)$ is the terminal position of the game.
\end{defn}

For an example of the position of a chocolate bar, see Figure \ref{choco46}.

\begin{remark}
Note that cutting chocolate bar horizontally is the same as reducing the first coordinate, and
cutting chocolate bar  vertically is the same as reducing the second coordinate.
\end{remark}

\begin{theorem}\label{grundyforchoco}
Let $\mathcal{G}(x,y)$ be the Grundy number of the chocolate bar game $(x,y)$,
$\mathcal{G}(x)$ and $\mathcal{G}(y)$ be the Grundy number of the Max Nim of 
$x$-stones and $y$-stones with the rule sequence $f(n)= \left\lfloor \frac{n}{2}\right\rfloor $.
If we define 
$G_n=\{2^k(2n+1)-1:k \in \mathbb{Z}_{\ge 0}\}$ for $n \in  \mathbb{Z}_{\geq0}$, then 
we have for $n \in  \mathbb{Z}_{\geq0}$,
\begin{equation}
\{(x,y):\mathcal{G}(x,y)=n\}=\cup\{G_s \cap G_t :s,t \in  \mathbb{Z}_{\geq0} \text{ such that } s \oplus t = n\}.\label{g1g2sum}
\end{equation}
\end{theorem}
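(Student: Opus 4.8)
The plan is to identify the chocolate bar game $(x,y)$ with the disjunctive sum of two copies of the maximum Nim with rule function $f(n)=\lfloor n/2\rfloor$, and then apply the Grundy-number formula for sums of games together with Lemma \ref{lemmaforgrundymax}. First I would observe that a legal move in the chocolate bar game either decreases the first coordinate $x$ to some $x'$ with $x' \ge \lceil x/2\rceil$ (a horizontal cut keeping the larger — or equal — piece) or decreases the second coordinate $y$ similarly, and leaves the other coordinate fixed; this is exactly the move structure of $\textit{move}(x)=\{x-u: 1\le u\le \lfloor x/2\rfloor\}$ in maximum Nim acting on one component. Hence the chocolate bar game $(x,y)$ is isomorphic, as a combinatorial game, to the sum of the maximum Nim on $x$ stones and the maximum Nim on $y$ stones, with terminal position $(1,1)$ corresponding to the pair of empty piles (here the pile of "$1$ stone" is already terminal since $\lfloor 1/2\rfloor = 0$). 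I would state this correspondence carefully, checking that the terminal position $(1,1)$ matches and that no move is lost or gained in either direction.

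Next I would invoke the theorem on sums of games quoted in the introduction: $\mathcal{G}(x,y) = \mathcal{G}(x)\oplus \mathcal{G}(y)$, where on the right $\mathcal{G}$ denotes the Grundy number of the one-pile maximum Nim. Therefore $\mathcal{G}(x,y)=n$ holds if and only if there exist $s,t\in\mathbb{Z}_{\ge 0}$ with $\mathcal{G}(x)=s$, $\mathcal{G}(y)=t$, and $s\oplus t=n$. Now I would apply Lemma \ref{lemmaforgrundymax}, which says $\{x:\mathcal{G}(x)=s\}=\{2^k(2s+1)-1:k\in\mathbb{Z}_{\ge 0}\}=G_s$, and likewise $\{y:\mathcal{G}(y)=t\}=G_t$. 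So $\mathcal{G}(x,y)=n$ iff $(x,y)\in G_s\times G_t$ for some $s,t$ with $s\oplus t=n$.

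The last step is to reconcile the product form $G_s\times G_t$ with the intersection form $G_s\cap G_t$ appearing in \eqref{g1g2sum}. This is a matter of reading: in the displayed equation the pair $(x,y)$ ranges so that $x$ lies in $G_s$ and $y$ lies in $G_t$; I would make explicit that $\{(x,y): x\in G_s, y\in G_t\}$ is what is intended (equivalently one may view it as $(G_s\times\mathbb{Z}_{\ge 0})\cap(\mathbb{Z}_{\ge 0}\times G_t)$, an intersection of cylinder sets, which is presumably the source of the $\cap$ notation). Taking the union over all $s,t$ with $s\oplus t=n$ then gives exactly \eqref{g1g2sum}. To finish I would note that every $n$ arises, and that the union is in fact over finitely many relevant pairs since $s,t\le n$ forces nothing but the decomposition to be a genuine Nim-sum decomposition; no infinitude issue arises because each $G_s$ is explicitly described.

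The main obstacle I anticipate is not any deep computation but the careful verification of the game isomorphism in the first step: one must confirm that "eat the smaller piece" with the tie-breaking convention "either piece if equal" translates precisely to the maximum-Nim move set $\{x-u: 1\le u\le\lfloor x/2\rfloor\}$ on each coordinate, including the boundary behaviour at $x=1$ and $x=2$, and that cuts along the two axes are independent so that the disjunctive sum structure is exact. Once that correspondence is pinned down, the rest is a direct citation of Theorem \ref{theoremofsumga}'s companion statement on sums together with Lemma \ref{lemmaforgrundymax}.
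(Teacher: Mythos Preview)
Your proposal is correct and follows essentially the same approach as the paper: identify the chocolate bar game with the disjunctive sum of two copies of maximum Nim with rule function $f(n)=\lfloor n/2\rfloor$, apply the Nim-sum formula $\mathcal{G}(x,y)=\mathcal{G}(x)\oplus\mathcal{G}(y)$, and then invoke Lemma~\ref{lemmaforgrundymax}. The paper's own proof is a two-sentence version of exactly this; your additional care in verifying the move-set correspondence and in flagging the $G_s\cap G_t$ versus $G_s\times G_t$ notational point is more explicit than what the paper provides, but does not change the argument.
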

\begin{proof}
Since the chocolate bar  game is the sum of two Max Nim with  a rule sequence $f(n)= \left\lfloor \frac{n}{2}\right\rfloor$, we have
$\mathcal{G}(x,y) = \mathcal{G}(x) \oplus \mathcal{G}(y)$. Therefore, by Lemma \ref{lemmaforgrundymax}, we have $(\ref{g1g2sum})$.
\end{proof}

\begin{defn}\label{ppmaxpp}
Let 
\begin{align}
\mathcal{P} = &\{(x,y):y=2^kx+2^k-1, x \in \mathbb{N} \text{ and } k \in  \mathbb{Z}_{\geq0}\} \nonumber \\
& \cup \{(x,y):x=2^ky+2^k-1, y\in \mathbb{N} \text{ and } k \in  \mathbb{Z}_{\geq0}\}. \nonumber
\end{align}	
\end{defn}

\begin{theorem}\label{ppositionmax}
The set $\mathcal{P}$ in Definition \ref{ppmaxpp} is the set of 
$\mathcal{P}$-positions of the chocolate bar game of Definition \ref{definitionofmaxchoco}.
\end{theorem}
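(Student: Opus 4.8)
The plan is to reduce the statement to the Grundy-number facts already established, after which only elementary arithmetic remains. By Theorem~\ref{theoremofsumga}, a position $(x,y)$ is a $\mathcal{P}$-position exactly when $\mathcal{G}(x,y)=0$. Since the chocolate bar game is the sum of two Max Nim games with rule function $f(n)=\left\lfloor \frac{n}{2}\right\rfloor$ (as in the proof of Theorem~\ref{grundyforchoco}), we have $\mathcal{G}(x,y)=\mathcal{G}(x)\oplus\mathcal{G}(y)$, so $\mathcal{G}(x,y)=0$ iff $\mathcal{G}(x)=\mathcal{G}(y)$; and by Lemma~\ref{lemmaforgrundymax} this holds iff there is some $n\in\mathbb{Z}_{\ge0}$ with $x,y\in G_n$, where $G_n=\{2^k(2n+1)-1:k\in\mathbb{Z}_{\ge0}\}$. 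Hence it will be enough to show that the set $\bigcup_{n\in\mathbb{Z}_{\ge0}}(G_n\times G_n)$, restricted to pairs of positive integers, coincides with $\mathcal{P}$.

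For the inclusion into $\mathcal{P}$, I would take $x=2^k(2n+1)-1$ and $y=2^l(2n+1)-1$ with $x,y\ge 1$. If $k\le l$, then $y+1=2^{\,l-k}(x+1)$, so $y=2^{\,l-k}x+2^{\,l-k}-1$ with $l-k\in\mathbb{Z}_{\ge0}$ and $x\in\mathbb{N}$, which is the first family in Definition~\ref{ppmaxpp}; if $l\le k$, the symmetric computation gives $x=2^{\,k-l}y+2^{\,k-l}-1$, the second family. Conversely, given $(x,y)\in\mathcal{P}$, say $y=2^kx+2^k-1$ with $x\in\mathbb{N}$ and $k\in\mathbb{Z}_{\ge0}$ (the other family being symmetric), I would write the factorization $x+1=2^a(2n+1)$ with $a,n\in\mathbb{Z}_{\ge0}$ (possible since $x+1\ge2$). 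Then $x=2^a(2n+1)-1\in G_n$ and $y+1=2^k(x+1)=2^{k+a}(2n+1)$, so $y=2^{k+a}(2n+1)-1\in G_n$; thus $(x,y)\in G_n\times G_n$.

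I do not expect a genuine obstacle: once the reduction to Grundy numbers is in place, everything is bookkeeping. The only points requiring a little care are keeping the two symmetric halves of $\mathcal{P}$ separate and matching the boundary conditions — the definition of $\mathcal{P}$ imposes $x,y\in\mathbb{N}$, whereas $0\in G_0$, so one must intersect $\bigcup_{n\in\mathbb{Z}_{\ge0}}(G_n\times G_n)$ with $\mathbb{N}\times\mathbb{N}$ (equivalently, note that every position of the actual game has $x,y\ge1$). A self-contained alternative avoiding the Grundy machinery would be the standard $\mathcal{P}$/$\mathcal{N}$ argument: check that the terminal position $(1,1)$ lies in $\mathcal{P}$, that every legal move from a position in $\mathcal{P}$ leaves $\mathcal{P}$, and that from every position outside $\mathcal{P}$ there is a legal move into $\mathcal{P}$; but since the necessary case analysis essentially duplicates the Max Nim computation already done, the Grundy-based route above is the cleaner one.
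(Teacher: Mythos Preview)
Your proposal is correct and follows essentially the same route as the paper: both reduce to the fact that the chocolate bar game is a sum of two Max Nim games, so $(x,y)$ is a $\mathcal{P}$-position iff $\mathcal{G}(x)=\mathcal{G}(y)$, and then use the explicit description of the Grundy level sets $G_n=\{2^k(2n+1)-1:k\in\mathbb{Z}_{\ge0}\}$ to rewrite this condition as $y+1=2^{l-k}(x+1)$ (or its symmetric counterpart). Your version is in fact slightly tidier, since you spell out both inclusions explicitly and flag the boundary issue that $0\in G_0$ must be excluded by restricting to $\mathbb{N}\times\mathbb{N}$.
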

\begin{proof}
Since the chocolate bar  game is the sum of two Max Nim, 
$(x,y) \in \mathcal{P}$ if and only if
\begin{equation}
\mathcal{G}(x) = \mathcal{G}(y) = n \label{gxeqgy}   
\end{equation}
for some $n \in  \mathbb{Z}_{\geq0}$.
By Lemma \ref{grundylemmaformax}, 
we have (\ref{gxeqgy}) if and only if
\begin{equation}
(x+1,y+1) = (2^s(2n+1),2^t(2n+1)) \label{relationxy1}
\end{equation}
for $s,t,n \in  \mathbb{Z}_{\geq0}$.
Equation (\ref{relationxy1}) is equivalent to 
\begin{equation}
y+1 = 2^{s-t}(x+1) \nonumber
\end{equation}
for $s, t \in  \mathbb{Z}_{\geq0}$ such that $s \geq t$
or
\begin{equation}
x+1 = 2^{t-s}(y+1) \nonumber
\end{equation}
for $s,t \in  \mathbb{Z}_{\geq0}$ such that $t \geq s$.
Since we need to consider only the chocolate bar whose 
 height and width are natural numbers, and we assume that
 $x, y \in \mathbb{N}$. Therefore, the set $\mathcal{P}$ in Definition \ref{ppmaxpp} is 
 the set of 
$\mathcal{P}$-positions of the chocolate bar game of Definition \ref{definitionofmaxchoco}.
\end{proof}

The chocolate bar game in this section was presented at the 17th International Olympiad in Informatics \cite{IOI2005}, where the set of $\mathcal{P}$-positions was presented with proof. Still, Grundy numbers of the chocolate bar games were not mentioned. 
In \cite{stackex}, the set of $\mathcal{P}$-positions was presented with a proof that is different from the one used in \cite{IOI2005}.

Each point in the graph in Figure \ref{maxp} with position $(x,y)$ (the bottom left is $(1,1)$) represents a $x \times y$ rectangular chocolate that is a $\mathcal{P}$-position.
The $\mathcal{P}$-positions in this graph of the chocolate bar game appear to be arranged in lines. For example, the central diagonal has a slope of 1; the following line down has a slope of $\frac{1}{2}$; the next, $\frac{1}{4}$; and so on.

\begin{figure}[H]
\begin{minipage}[t]{0.25\textwidth}
\begin{center}
\includegraphics[height=2.5cm]{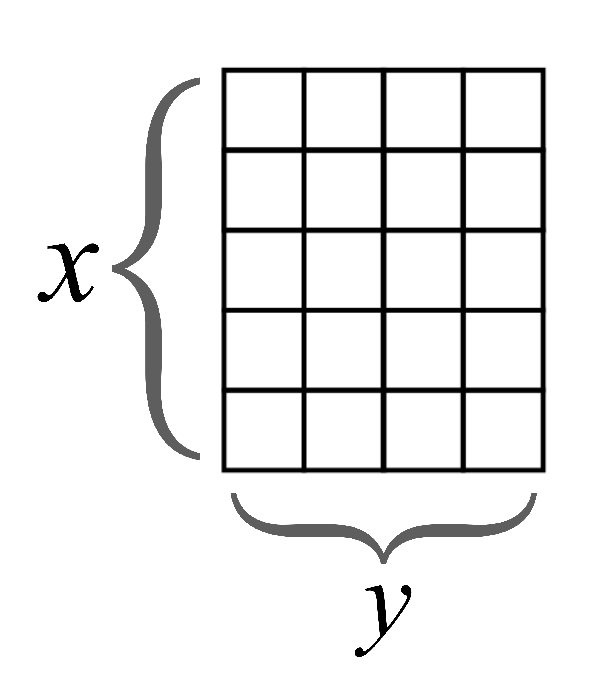}
\caption{(5,4)}
\label{choco46}
\end{center}
\end{minipage}
\begin{minipage}[t]{0.45\textwidth}
\begin{center}
\includegraphics[height=3.5cm]{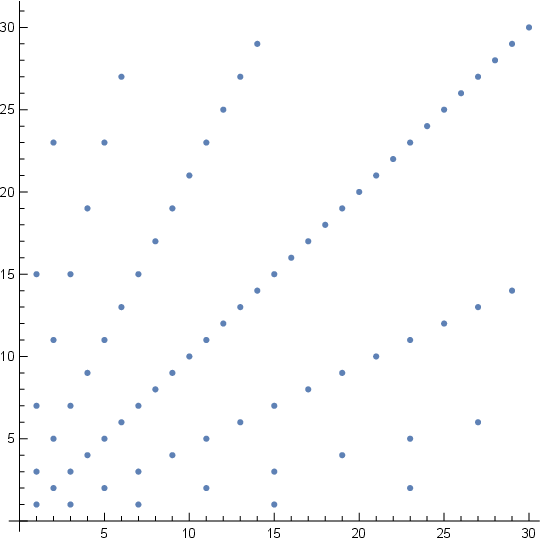}
\caption{P-positions}
\label{maxp}
\end{center}
\end{minipage}
\end{figure}

\section{Partizan chocolate bar  game with a restriction on the size of chocolate bar  to be eaten }\label{partizan1}
There are two different classifications of a game in combinatorial games: impartial
and partizan. An impartial game is one where the same moves exist for both players and games in 
 sections  \ref{maxnim}  and \ref{chocosize} are impartial.

In this section, we study partizan chocolate games, where  players control their own moves. 

Let the name of two players be Left (using she as a pronoun) and Right (using he as a pronoun). Here we use chocolate bars with black and white stripes. See Figures \ref{bwchocolate1}, \ref{bwchocolate2}, \ref{1by5}, and \ref{5by1} for examples. The use of this black and white chocolate bar  was suggested by Prof. R. J. Nowakowski when one of the authors discussed a partizan version of chocolate bar games at Combinatorial Game Theory Colloquium IV.

Left cuts the chocolate bar  into two parts, and she eats the part with fewer black blocks. When the number of black blocks is the same for both parts, she can eat either. When there is only a one-by-one white chocolate bar that is $(1,1,0)$ in figures \ref{1by1} and \ref{m1by1}, she can eat it. 

Similarly, Right cuts the chocolate bar  into two and eats the part with fewer white blocks. If the number of white blocks is the same, he can eat either. When there is only a one-by-one black chocolate bar that is $(1,1,1)$ in figures \ref{1by1} and \ref{m1by1}, he can eat it. 

A player loses in the game when she or he cannot eat the remaining chocolate bar.

\begin{minipage}[t]{0.2\textwidth}
\begin{center}
\begin{figure}[H]
\includegraphics[height=1.7cm]{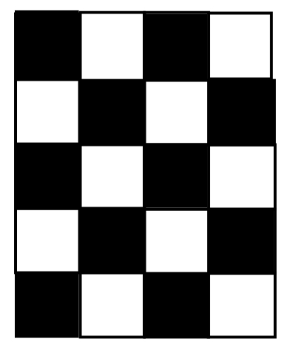}
\caption{\\(5,4,1)}\label{bwchocolate1}
\end{figure}
\end{center}
\end{minipage}
\begin{minipage}[t]{0.2\textwidth}
\begin{center}
\begin{figure}[H]
\includegraphics[height=1.7cm]{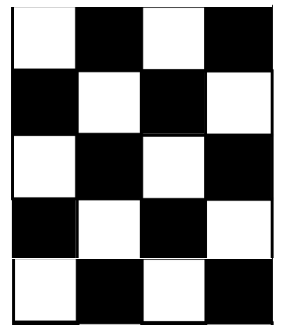}
\caption{\\(5,4,0)}\label{bwchocolate2} 
\end{figure}
\end{center}
\end{minipage}
\begin{minipage}[t]{0.2\textwidth}
\begin{center}
\begin{figure}[H]
\includegraphics[height=1.7cm]{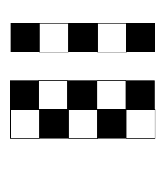}
\caption{\\(1,5,1)\\and(2,5,1)}\label{1by5}
\end{figure}
\end{center}
\end{minipage}
\begin{minipage}[t]{0.2\textwidth}
\begin{center}
\begin{figure}[H]
\includegraphics[height=1.7cm]{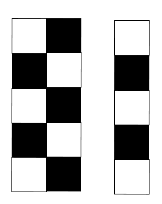}
\caption{\\(5,2,0)\\and(5,1,0)}\label{5by1}
\end{figure}
\end{center}
\end{minipage}
\begin{minipage}[t]{0.1\textwidth}
\begin{center}
\begin{figure}[H]
\includegraphics[height=1.7cm]{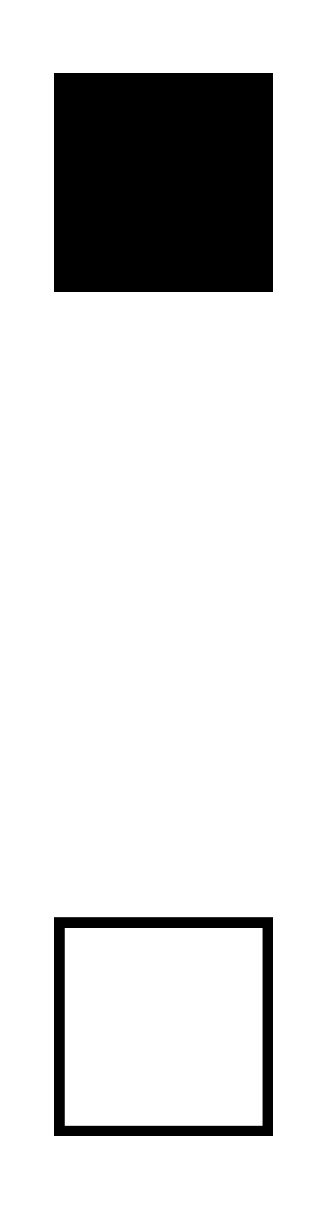}
\caption{(1,1,1)\\and(1,1,0)}\label{1by1}
\end{figure}
\end{center}
\end{minipage}

In the following, we describe a bar of black and white chocolate bar  with a matrix of numbers, where $1$ is for a black block and $0$ is for a white block.

Black and white chocolate bars in figures \ref{bwchocolate1}, \ref{bwchocolate2}, \ref{1by5}, and \ref{5by1} are represented by matrices
in figures \ref{matrix1}, \ref{matrix0}, \ref{matrix2}, and \ref{matrix3}.

\begin{minipage}[t]{0.2\textwidth}
\begin{center}
\begin{figure}[H]
\includegraphics[height=1.6cm]{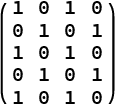}
\caption{\\(5,4,1)}\label{matrix1}
\end{figure}
\end{center}
\end{minipage}
\begin{minipage}[t]{0.2\textwidth}
\begin{center}
\begin{figure}[H]
\includegraphics[height=1.6cm]{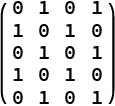}
\caption{\\(5,4,0)}\label{matrix0}
\end{figure}
\end{center}
\end{minipage}
\begin{minipage}[t]{0.2\textwidth}
\begin{center}
\begin{figure}[H]
\includegraphics[height=1.65cm]{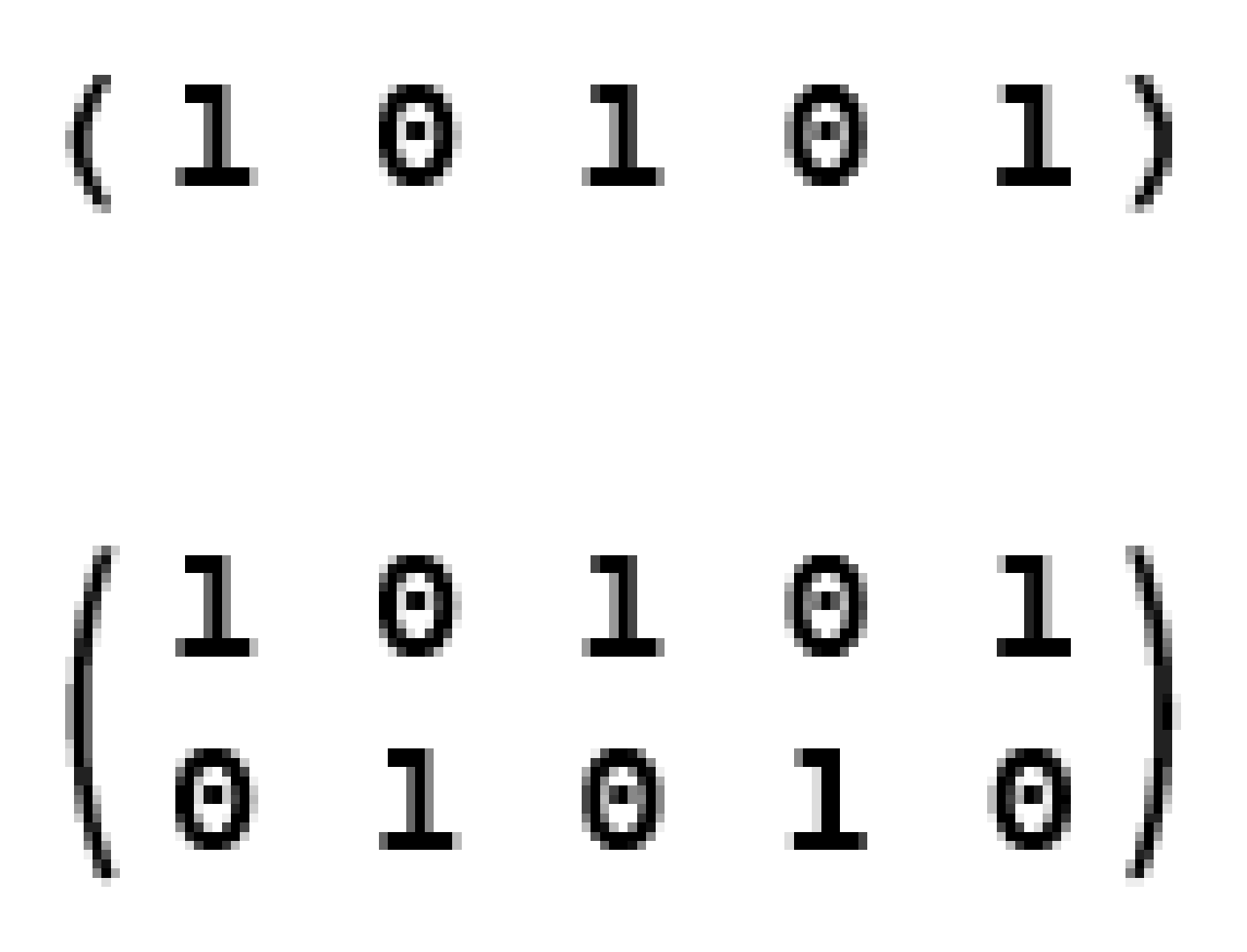}
\caption{\\(1,5,1)\\ and (2,5,1)}\label{matrix2}
\end{figure}
\end{center}
\end{minipage}
\begin{minipage}[t]{0.2\textwidth}
\begin{center}
\begin{figure}[H]
\includegraphics[height=1.7cm]{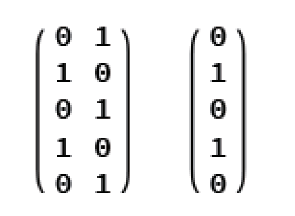}
\caption{\\(5,2,0)\\ and (5,1,0)}\label{matrix3}
\end{figure}
\end{center}
\end{minipage}
\begin{minipage}[t]{0.1\textwidth}
\begin{center}
\begin{figure}[H]
\includegraphics[height=1.5cm]{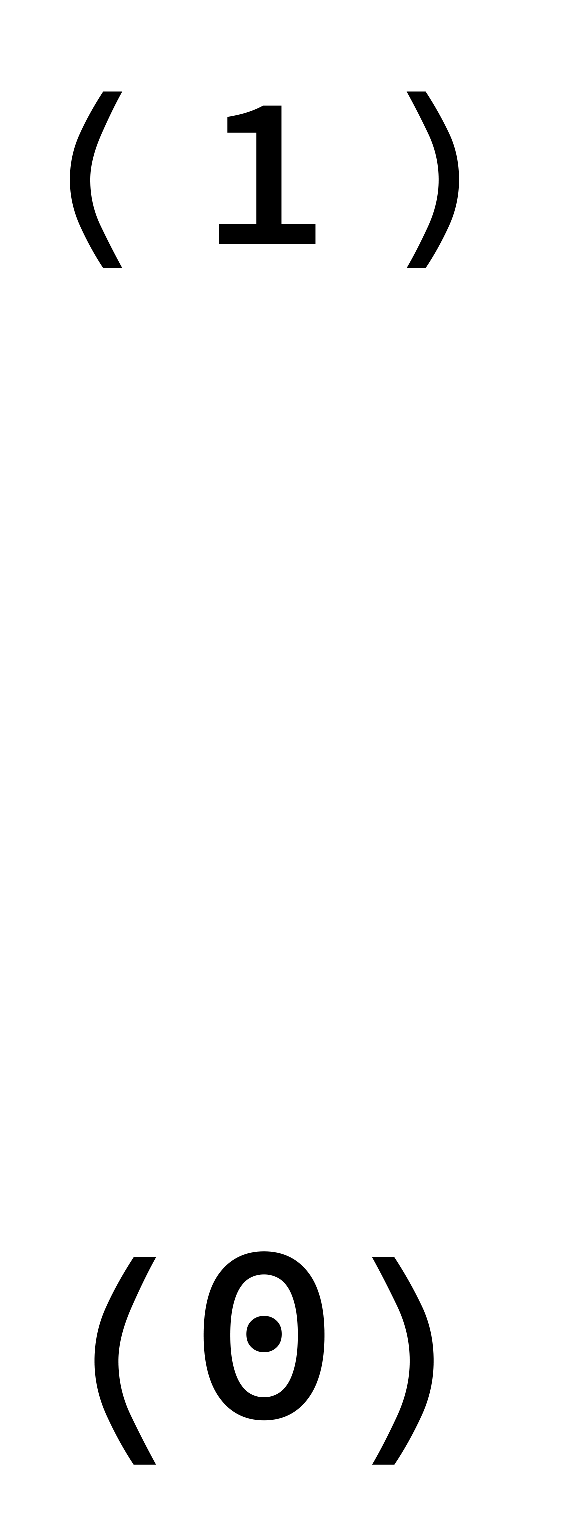}
\caption{(1,1,1)\\and(1,1,0)}\label{m1by1}
\end{figure}
\end{center}
\end{minipage}

We denote a chocolate bar by $(x,y,s)$, where $x$ and $y$ are the height and the width of the chocolate bar, respectively, and $s$ is the first number of the first row.

Please compare the chocolate bar  in Figure \ref{bwchocolate1} and the chocolate bar  in Figure \ref{bwchocolate2}. These are represented as matrices in \ref{matrix1} and \ref{matrix0}. Note that the difference between $(5,4,1)$ and $(5,4,0)$ is the value of the third coordinate, the first number of the first row of each matrix.

According to the rule of the game, there are three ways that the game ends.
A player loses the game when there is no chocolate bar  left.  Player L loses when there is only $(1,1,1)$, and Player R loses when there is only $(1,1,0).$

Beside $\mathcal{P}$-\textit{position} and  $\mathcal{N}$-\textit{position} in 
Section \ref{maxnim}, we need the following two outcome classes in Section \ref{partizan1}.

\begin{defn}\label{LRpositions}
$(a)$ A position is referred to as a $\mathcal{L}$-\textit{position} if it is a winning position for L.\\
$(b)$ A position is referred to as an $\mathcal{R}$-\textit{position} if it is a winning position for R.
\end{defn}

\begin{theorem}
Every position of a game belongs to exactly one of  four outcome classes 
 $\mathcal{P}$-position, $\mathcal{N}$-position, $\mathcal{L}$-position, and $\mathcal{R}$-position.
\end{theorem}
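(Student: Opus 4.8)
The plan is to show that the four outcome classes partition the game's positions, which amounts to proving that every position lies in at least one class and in at most one. Since the game is finite (each move strictly reduces the size $xy$ of the chocolate bar, and the game terminates at one of $(1,1,0)$, $(1,1,1)$, or an empty bar), I would argue by induction on $xy$, following the standard outcome-class analysis for partizan games with a normal-type ending convention but adapted to the fact that here \emph{both} players may be unable to move from certain terminal configurations.

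First I would set up the framework precisely. For a position $\mathbf{p}$, let $\textit{move}_L(\mathbf{p})$ and $\textit{move}_R(\mathbf{p})$ denote the sets of positions reachable by a Left move and a Right move, respectively. Define: $\mathbf{p}$ is an $\mathcal{L}$-position if Left wins whether she moves first or second; an $\mathcal{R}$-position if Right wins in either role; a $\mathcal{P}$-position if the player \emph{not} to move wins (i.e.\ whoever moves first loses); and an $\mathcal{N}$-position if the player to move wins. These are exactly the four combinations of the two booleans ``Left wins going first'' and ``Left wins going second'' (equivalently ``Right wins going first''/``Right wins going second''), once one observes that in a finite game with no draws, for each (player, role) pair exactly one player wins. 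That observation — that every play of the game is finite and ends with a well-defined loser — is the first thing I would nail down, using the monotone decrease of bar size.

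Next, the heart of the argument: characterize each class recursively and verify exhaustiveness and exclusivity simultaneously by induction. The key recursive facts are: Left (moving first) wins from $\mathbf{p}$ iff either $\mathbf{p}$ is terminal for Right-to-move-next in a way that... — more cleanly — Left-moving-first wins iff there exists $\mathbf{q}\in\textit{move}_L(\mathbf{p})$ from which Right-moving-first loses, OR $\textit{move}_L(\mathbf{p})\neq\emptyset$ is false and we are at a terminal position assigned as a win for Left (namely $(1,1,0)$, where Right cannot move). So I would phrase it as: ``Left wins moving first from $\mathbf{p}$'' $\iff$ $\textit{move}_L(\mathbf{p})=\emptyset$ and $\mathbf p$ is a position where Right has just been left unable to move appropriately, or $\exists\mathbf q\in\textit{move}_L(\mathbf p)$ with ``Right loses moving first from $\mathbf q$.'' The induction hypothesis supplies, for every $\mathbf{q}$ of smaller size, the truth values of the four relevant ``wins'' predicates, and from these four booleans at all children one computes the four booleans at $\mathbf{p}$; reading off the pair of booleans then assigns $\mathbf{p}$ to exactly one of the four classes. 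Exclusivity is then automatic because the four classes correspond to the four distinct values of an ordered pair of booleans, and exhaustiveness is automatic because every such pair occurs as \emph{some} class label.

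The main obstacle — and the only place the particular rules matter — is the boundary/terminal analysis: one must be careful that the ending convention here is not the pure ``last player to move wins'' of classical normal play, because the three terminal shapes $(1,1,1)$, $(1,1,0)$, and the empty bar each declare a specific loser regardless of whose turn notionally comes next, and a $1\times 1$ bar is a legal position from which one player \emph{can} still ``eat'' (ending the game with the opponent losing) while it is terminal for the other. I would handle this by treating ``eating the last $1\times1$ piece'' as a legal final move available only to the color-matching player, so that $(1,1,0)$ has a Left move (to the empty position, which is a loss for whoever is then to move, i.e.\ Right) but no Right move, and symmetrically for $(1,1,1)$; with that convention the terminal cases fold uniformly into the inductive step and no separate case analysis beyond the base positions of size $1$ is needed. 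Once the terminal bookkeeping is fixed, the rest is the routine finite-game partition argument, and I would keep the write-up at that level rather than expanding every boolean table.
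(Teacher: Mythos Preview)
Your argument is correct: framing the outcome class as the ordered pair of booleans (``Left wins moving first'', ``Left wins moving second'') and observing that in a finite, draw-free game each boolean is well-defined by induction on position size is exactly the standard way to establish this partition, and your handling of the terminal positions $(1,1,0)$, $(1,1,1)$, and the empty bar is the right way to fit the game's slightly nonstandard ending convention into the normal-play framework.

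The paper, however, does not prove this at all: its proof consists of a single sentence citing page~36 of Albert, Nowakowski, and Wolfe's \emph{Lessons in Play}. So your approach is genuinely different in that you supply a self-contained argument tailored to the specific game, whereas the paper simply invokes the general textbook fact about finite partizan games. What your approach buys is independence from the reference and an explicit verification that the chocolate bar game's ending rule (one player may ``eat'' the final $1\times 1$ block while the other may not) really does fit the standard framework; what the paper's approach buys is brevity and the recognition that nothing game-specific is actually needed, since the statement holds for every short partizan game under the last-player-wins convention once the terminal moves are encoded as you describe.
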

\begin{proof}
This fact is presented on page 36 of \cite{lesson}. 
\end{proof}

\begin{lemma}\label{howtodecide}
$(i)$ A position is  a $\mathcal{P}$-position if and only if 
$R$ always move to a  $\mathcal{L}$ or $\mathcal{N}$-position and 
$L$ always moves to a $ \mathcal{R}$ or $\mathcal{N}$-position.\\
$(ii)$ A position is  a $\mathcal{N}$-position if and only if $R$ can move to a  $R$ or $\mathcal{P}$-position and $L$ can move to a $\mathcal{L}$ or $ \mathcal{P}$-position.\\
$(iii)$ A position is  a $\mathcal{L}$-position if and only if $R$ always move to a  $\mathcal{L}$ or $\mathcal{N}$-position and $L$ can move to a $\mathcal{L}$ or $ \mathcal{P}$-position.\\
$(iv)$  A position is  a $\mathcal{R}$-position if and only if 
$R$ can move to a  $R$ or $\mathcal{P}$-position and $L$ always moves to a $ \mathcal{R}$ or $\mathcal{N}$-position.
\end{lemma}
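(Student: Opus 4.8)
The plan is to derive all four equivalences at once by unwinding the definitions of the outcome classes $\mathcal{P}$, $\mathcal{N}$, $\mathcal{L}$, $\mathcal{R}$, using only the elementary backward-induction principle: from a given position the player about to move \emph{loses} if and only if every move available to that player leads to a position from which the opponent (now the player to move) \emph{wins}; equivalently, the player about to move \emph{wins} if and only if some available move leads to a position from which the opponent loses. This principle applies because the partizan restricted chocolate bar game is finite: every legal move removes at least one square, so the number $xy$ of squares strictly decreases along any play, and hence each position has a well-defined outcome.

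First I would record the dictionary between outcome classes and the winner. By Definition \ref{LRpositions} together with the definitions of $\mathcal{P}$- and $\mathcal{N}$-positions, a position $\mathbf{q}$ satisfies: with $L$ to move, $L$ wins iff $\mathbf{q}$ is an $\mathcal{L}$- or $\mathcal{N}$-position and $L$ loses iff $\mathbf{q}$ is an $\mathcal{R}$- or $\mathcal{P}$-position; with $R$ to move, $R$ wins iff $\mathbf{q}$ is an $\mathcal{R}$- or $\mathcal{N}$-position and $R$ loses iff $\mathbf{q}$ is an $\mathcal{L}$- or $\mathcal{P}$-position. These are simply restatements of the facts that $\mathcal{P}$ means ``the player to move loses'', $\mathcal{N}$ means ``the player to move wins'', $\mathcal{L}$ means ``$L$ wins whoever moves first'', and $\mathcal{R}$ means ``$R$ wins whoever moves first'', and they are unambiguous since every position lies in exactly one class.

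Combining the dictionary with the backward-induction principle then gives each part. For (i): $\mathbf{p}$ is a $\mathcal{P}$-position iff the player to move loses no matter whether it is $L$ or $R$; ``$R$ to move loses'' iff every $R$-move leads to a position where $L$-to-move wins, i.e. to an $\mathcal{L}$- or $\mathcal{N}$-position, and symmetrically ``$L$ to move loses'' iff every $L$-move leads to an $\mathcal{R}$- or $\mathcal{N}$-position. For (ii): $\mathbf{p}$ is an $\mathcal{N}$-position iff the player to move wins whoever it is; ``$R$ to move wins'' iff some $R$-move leads to a position where $L$-to-move loses, i.e. to an $\mathcal{R}$- or $\mathcal{P}$-position, and ``$L$ to move wins'' iff some $L$-move leads to an $\mathcal{L}$- or $\mathcal{P}$-position. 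For (iii): $\mathbf{p}$ is an $\mathcal{L}$-position iff $L$ wins both when $L$ moves first and when $R$ moves first; the first condition reads ``some $L$-move leads to an $\mathcal{L}$- or $\mathcal{P}$-position'' and the second reads ``every $R$-move leads to an $\mathcal{L}$- or $\mathcal{N}$-position''. Part (iv) follows from (iii) by interchanging $L$ and $R$. I do not anticipate a real obstacle; the only points requiring attention are the justification of the backward-induction principle via finiteness, and the handling of the positions $(1,1,0)$, $(1,1,1)$, and the empty bar, at which one of the players has no move so that the universally quantified clauses hold vacuously and the existential ones fail --- a direct check confirms the lemma classifies each of these three cases correctly.
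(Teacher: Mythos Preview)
Your argument is correct: the four equivalences follow directly from the backward-induction principle together with the dictionary between the outcome classes and ``who wins when $X$ moves first'', and the terminal positions are handled by the vacuous-quantifier remark. The paper, by contrast, does not give a proof at all --- it simply cites Observation~2.4 on page~37 of \cite{lesson} (Albert, Nowakowski, and Wolfe). So your write-up is more self-contained than the paper's, while the paper's treatment is shorter but relies on the reader having that textbook at hand; both are legitimate, and yours has the advantage of making the lemma usable without an external reference.
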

\begin{proof}
This fact is presented as Observation 2.4 on page 37 of \cite{lesson}. 

\end{proof}
The result of Lemma \ref{howtodecide} is presented as
 Table \ref{decideoutcome}. By this, we can decide the outcome class of each position.

\begin{table}[H]
\centering
\caption{\label{gamevalue2}How to decide the outcome class of a game}\label{decideoutcome}
\begin{tabular}{|l|l|l|} 
\hline
&  $R$ can move to a & $R$ always moves to  \\ 
&  $ \mathcal{R}$ or $\mathcal{P}$-position  & a $\mathcal{L}$ or $\mathcal{N}$-position \\ 
\hline
$L$ can move to a &  &   \\ 
 $\mathcal{L}$ or $ \mathcal{P}$-position & \hspace{1cm} $\mathcal{N}$-position & \hspace{1cm} $\mathcal{L}$-position  \\ 
\hline
$L$ always moves to &  &    \\ 
a $ \mathcal{R}$ or $\mathcal{N}$-position & \hspace{1cm} $\mathcal{R}$-position & \hspace{1cm} $\mathcal{P}$-position 
\\ 
\hline
\end{tabular}
\end{table}

\begin{lemma}\label{allnatural}
Let 
\begin{equation}
\mathbb{N}_1 = \{2^s(2p+5)-1:s,p \in  \mathbb{Z}_{\geq0}\},\nonumber
\end{equation}
\begin{equation}
\mathbb{N}_2 =  \{2^s 4-1:s \in  \mathbb{Z}_{\geq0}\},\nonumber
\end{equation}
and 
\begin{equation}
\mathbb{N}_3 =  \{2^s 3-1:s \in  \mathbb{Z}_{\geq0}\}.\nonumber
\end{equation}
Then the set of natural numbers
\begin{equation}
\mathbb{N} = \{1\} \cup \mathbb{N}_1 \cup \mathbb{N}_2 \cup \mathbb{N}_3.\nonumber
\end{equation}
\end{lemma}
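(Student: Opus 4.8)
The plan is to establish the set equality by proving the two inclusions separately. The inclusion $\{1\} \cup \mathbb{N}_1 \cup \mathbb{N}_2 \cup \mathbb{N}_3 \subseteq \mathbb{N}$ is immediate: each element of $\mathbb{N}_1$, $\mathbb{N}_2$, and $\mathbb{N}_3$ has the shape $2^s m - 1$ with $m \ge 3$ and $s \in \mathbb{Z}_{\ge 0}$, hence is at least $2$, and $1 \in \mathbb{N}$ by definition. So the substantive direction is the reverse inclusion $\mathbb{N} \subseteq \{1\} \cup \mathbb{N}_1 \cup \mathbb{N}_2 \cup \mathbb{N}_3$.

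For that, I would take an arbitrary $n \in \mathbb{N}$ and pass to $n+1 \ge 2$, using the $2$-adic decomposition $n+1 = 2^s q$ with $s \in \mathbb{Z}_{\ge 0}$ and $q$ odd. The argument then splits according to the size of the odd part $q$. If $q \ge 5$, then $q-5$ is a non-negative even integer, so setting $p = (q-5)/2 \in \mathbb{Z}_{\ge 0}$ gives $n = 2^s(2p+5)-1 \in \mathbb{N}_1$. If $q = 3$, then $n = 2^s\cdot 3 - 1 \in \mathbb{N}_3$. If $q = 1$, then $n+1 = 2^s$; since $n+1 \ge 2$ we have $s \ge 1$, and $s=1$ gives $n=1$ while $s \ge 2$ gives $n = 2^{s-2}\cdot 4 - 1 \in \mathbb{N}_2$ (using $2^s\cdot 4 = 2^{s+2}$). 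These cases are exhaustive, so every $n \in \mathbb{N}$ lies in the asserted union.

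The only point requiring any care is the bookkeeping on parameter ranges: one must note that $q$ odd forces $q \in \{1,3\}$ or $q \ge 5$ with $q-5$ even, and that the degenerate possibility $s=0$, $q=1$ (which would correspond to $n=0$) is ruled out precisely because $n \ge 1$ forces $n+1 \ge 2$. I do not expect a genuine obstacle here; the proof is a short, exhaustive case analysis on the $2$-adic valuation and odd part of $n+1$, and should take only a few lines.
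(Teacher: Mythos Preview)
Your proposal is correct and follows essentially the same argument as the paper: both pass to the $2$-adic factorisation $n+1 = 2^s q$ with $q$ odd and then split into the three cases $q=1$, $q=3$, and $q\ge 5$. The only cosmetic difference is that the paper starts directly with $n\ge 2$ (so that the case $n=1$ is handled by the explicit singleton $\{1\}$), whereas you route $n=1$ through the subcase $q=1$, $s=1$; the logic is the same.
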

\begin{proof}
Let $n$ be a natural number such that 
\begin{equation}
n \geq 2. \label{biggerthan3}
\end{equation}
For $n$, there exist $m \in  \mathbb{Z}_{\geq0}$ and $k \in \mathbb{N}$ such that $n+1=2^m k$ and $k$ is odd.
If $k=1$, then by (\ref{biggerthan3}), we have $2^m =n+1 \geq 3$. Therefore, we have $m \geq 2$, and 
 $n=2^{m-2}4-1 \in \mathbb{N}_2$.
If $k=3$, $n = 2^m3-1 \in \mathbb{N}_3$. If $k$ is an odd number such that 
$k \geq 5$, then $n = 2^m(2p+5)-1 \in \mathbb{N}_1$ for some $p \in  \mathbb{Z}_{\geq0}$.
\end{proof}

\begin{figure}[H]
\begin{center}
\includegraphics[height=2.8cm]{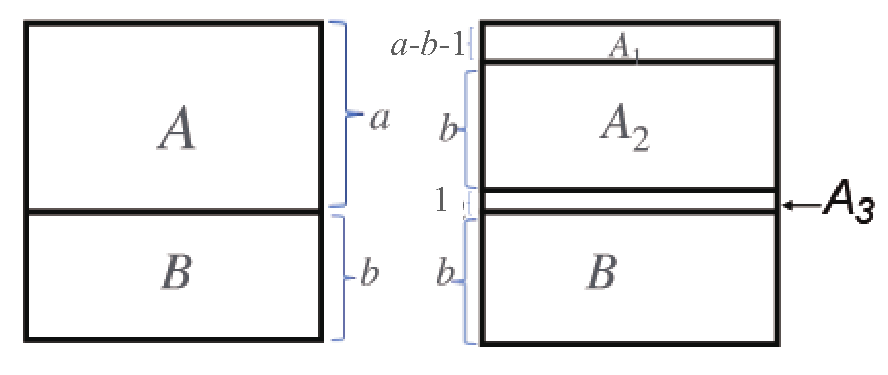}
\caption{To divide chocolate bars}\label{cutchoco}
\end{center}
\end{figure}

\begin{lemma}\label{morethanhalf}
Suppose that  we cut a chocolate bar  $(a+b,m,p)$ into two pieces $A=(a,m,s)$ and $B=(b,m,t)$, where $a,b,m \in \mathbb{N},$ $m \geq 2$, $a>b\geq 1$ and $s,t =0$ or $1$.
Then, the number of $0$ and the number of $1$ in $A$ are larger than those of $B$, respectively.
\end{lemma}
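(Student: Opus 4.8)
The plan is to reduce the statement to a purely arithmetical comparison of how many black and how many white blocks fit into a rectangular sub-board of the alternating pattern. Recall that in a bar $(x,y,c)$ the block in row $i$ and column $j$ equals $c$ when $i+j$ is even and equals $1-c$ when $i+j$ is odd. Counting the pairs $(i,j)$ with $1\le i\le x$, $1\le j\le y$ according to the parity of $i+j$ gives that the number of blocks equal to $c$ is $\lceil xy/2\rceil$ if $xy$ is odd and $xy/2$ if $xy$ is even, and the number of blocks equal to $1-c$ is the complementary value. The upshot I want to isolate is: for \emph{any} bar $(x,y,c)$ and either colour, the number of blocks of that colour lies in the two-element set $\{\lfloor xy/2\rfloor,\lceil xy/2\rceil\}$.

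Now I would apply this to the two pieces. For $A=(a,m,s)$ the number of $0$'s is at least $\lfloor am/2\rfloor$ and the number of $1$'s is at least $\lfloor am/2\rfloor$ as well; for $B=(b,m,t)$ the number of $0$'s is at most $\lceil bm/2\rceil$ and likewise the number of $1$'s is at most $\lceil bm/2\rceil$ (the colour-swap $0\leftrightarrow 1$ is a symmetry of the whole pattern, so the same bounds serve for both colours and for every choice of $s,t\in\{0,1\}$). Consequently the entire lemma follows from the single inequality $\lfloor am/2\rfloor>\lceil bm/2\rceil$.

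To establish that inequality I would split on the parity of $bm$. If $bm$ is even, then $\lceil bm/2\rceil=bm/2$; since $a\ge b+1$ and $m\ge 2$ we have $am-bm=(a-b)m\ge 2$, so $\lfloor am/2\rfloor\ge\lfloor(bm+2)/2\rfloor=bm/2+1>\lceil bm/2\rceil$. If $bm$ is odd, then $b$ and $m$ are both odd, so $m\ge 3$ and $am-bm=(a-b)m\ge 3$; hence $\lfloor am/2\rfloor\ge\lfloor(bm+3)/2\rfloor=(bm+1)/2+1>(bm+1)/2=\lceil bm/2\rceil$. In either case $\lfloor am/2\rfloor>\lceil bm/2\rceil$, which by the previous paragraph gives that $A$ contains strictly more $0$'s than $B$ and strictly more $1$'s than $B$.

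The one step that needs care is the odd case: the crude estimate $am\ge bm+2$ alone only yields $\lfloor am/2\rfloor\ge\lceil bm/2\rceil$ with possible equality, so one must observe that equality would require $bm$ odd together with $am=bm+2$, which is impossible because $bm$ odd forces $m$ odd and then $am-bm$ is a positive multiple of an odd number $\ge 3$. This is exactly the point where the hypothesis $m\ge 2$ is indispensable (and is sharpened to $m\ge 3$ when $m$ is odd): a split of $(2,1,\cdot)$ into $(1,1,\cdot)$ and $(1,1,\cdot)$ shows the conclusion can fail when $m=1$, so no further simplification of this case is available.
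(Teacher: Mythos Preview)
Your proof is correct, but it takes a genuinely different route from the paper's. The paper argues geometrically: it decomposes the larger piece $A$ into three horizontal slices $A_1=(a-b-1,m,\cdot)$, $A_2=(b,m,\cdot)$, and $A_3=(1,m,\cdot)$, observes that $A_2$ and $B$ are mirror images about the single row $A_3$ (hence have identical colour counts), and then notes that $A_3$, being a row of length $m\ge 2$, contributes at least one $0$ and at least one $1$. Your argument instead bounds the colour counts in any $x\times y$ checkerboard piece by $\{\lfloor xy/2\rfloor,\lceil xy/2\rceil\}$ and reduces everything to the arithmetic inequality $\lfloor am/2\rfloor>\lceil bm/2\rceil$, which you then verify with a clean parity split. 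The paper's symmetry argument is shorter and more visual, and it explains \emph{why} the hypothesis $m\ge 2$ matters (the extra row $A_3$ must contain both colours); your approach is entirely self-contained, needs no picture, and makes the role of $m\ge 2$ explicit through the case analysis, including the nice observation that when $bm$ is odd one actually has $m\ge 3$.
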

\begin{proof}
We divide $A$ into $A_1=(a-b-1,m,p)$, $A_2=(b,m,r)$ and $A_3=(1,m,s)$, where $r,s =0,1$. See Figure \ref{cutchoco}.
If $a=b+1$, then $A_1=(a-b-1,m,p)=(0,m,p) = \emptyset$.
Since $B$ and $A_2$ are horizontally symmetrical as matrices of $0$ and $1$, 
the numbers of $0$ and $1$ are the same for both.
Since $m \geq 2,$ $A_3$ contains some $0$ and $1$, 
 the number of $0$ and the number of $1$ in $A=A_1 \cup A_2 \cup A_3$ are larger than those of $A_2$.
\end{proof}

\begin{remark}\label{remarkbigsmall}
Suppose you have a chocolate bar  $(x,y,s)$ with $x,y \geq 2$ and $s=0,1$.
When you cut this chocolate bar  into two pieces, then 
by Lemma \ref{morethanhalf}, you can eat the smaller one and cannot eat the bigger one. If two pieces are the same size, you have to count the $0$ and $1$ in these pieces to decide which to eat.
As for the chocolate bars $(n,1,s)$ or $(1,n,s)$ with 
\end{remark}

\begin{lemma}\label{lemmaforheight1}
Let $n \in \mathbb{Z}_{\geq0}$. Then, we have the following: \\
$(i)$ If $L$ cuts $(4n+1,1,0)$ chocolate bar , she gets chocolate bars
$(2n,1,0),\cdots, (4n,1,0)$.\\
$(ii)$ If $R$ cuts $(4n+1,1,0)$ chocolate bar , he gets chocolate bars
$(2n+1,1,0), \cdots, (4n,1,0)$.\\
$(iii)$ If $R$ cuts $(4n+3,1,0)$ chocolate bar , he gets chocolate bars
$(2n+1,1,0), \cdots, (4n+2,1,0)$.
\end{lemma}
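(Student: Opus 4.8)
\medskip
\noindent\textbf{Approach.} The plan is to argue by a direct case analysis on the height $k$ of the cut, using the explicit colour pattern of a width-$1$ bar. First I would record the structure we are working with: the bar $(m,1,0)$ is a single column of $m$ unit squares whose colours alternate, the $i$-th square counted from the top being white exactly when $i$ is odd; hence among its top $k$ squares there are $\lceil k/2\rceil$ white and $\lfloor k/2\rfloor$ black squares, and the bottom $m-k$ squares contain the remaining ones. Because the bar has width $1$, the only legal cuts are the $m-1$ horizontal ones; the cut at height $k$ (for $1\le k\le m-1$) produces a top piece formed by the first $k$ squares and a bottom piece formed by the last $m-k$ squares, and the player then removes whichever piece has fewer black squares (if the player is $L$) or fewer white squares (if the player is $R$), or either piece when the two counts are equal. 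I would also use the elementary fact that turning a width-$1$ bar upside down is an isomorphism of positions, so that a piece of even length whose top square is black --- that is, a bar $(j,1,1)$ with $j$ even --- is the same position as $(j,1,0)$; since every piece arising from $(N,1,0)$ with $N$ odd has length less than $N$ and, after this normalisation, is of the form $(j,1,0)$, each claim reduces to determining the exact set of lengths $j$ that occur.

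For $(i)$, cutting $(4n+1,1,0)$ at height $k$ leaves a top piece with $\lfloor k/2\rfloor$ black squares and a bottom piece with $2n-\lfloor k/2\rfloor$ black squares. Thus $L$ removes the top piece when $\lfloor k/2\rfloor<n$, i.e.\ $k\le 2n-1$, leaving a bottom piece of length $4n+1-k\in\{2n+2,\dots,4n\}$; she may remove either piece when $\lfloor k/2\rfloor=n$, i.e.\ $k\in\{2n,2n+1\}$, leaving a piece of length $2n$ or $2n+1$; and she removes the bottom piece when $k\ge 2n+2$, leaving a top piece of length $k\in\{2n+2,\dots,4n\}$. After the upside-down normalisation the union of these lengths is exactly $\{2n,2n+1,\dots,4n\}$, which is $(i)$. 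Parts $(ii)$ and $(iii)$ go the same way, now counting \emph{white} squares. In $(ii)$ the top $k$ squares of $(4n+1,1,0)$ contain $\lceil k/2\rceil$ white squares and the bottom $2n+1-\lceil k/2\rceil$; a tie would force $2\lceil k/2\rceil=2n+1$, which is impossible, so $R$ removes the top piece exactly when $k\le 2n$ and the bottom piece exactly when $k\ge 2n+1$, and the surviving lengths run over $\{2n+1,\dots,4n\}$. In $(iii)$ the bar $(4n+3,1,0)$ has $2n+2$ white squares in all, so a tie occurs exactly at $k\in\{2n+1,2n+2\}$, and the identical collection argument yields surviving lengths $\{2n+1,\dots,4n+2\}$.

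The only delicate points are bookkeeping ones: pinning down the two endpoints of each interval --- in $(i)$ and $(iii)$ the small endpoint is contributed only by the tie cuts, where an off-by-one would be easy to slip in --- and phrasing the upside-down normalisation carefully enough that a piece whose top square is black is legitimately recorded with third coordinate $0$. Both are routine, and the substance of the argument is simply the parity evaluation of $\lceil k/2\rceil$ and $\lfloor k/2\rfloor$ together with the comparison that decides which piece is eaten; I do not anticipate any deeper obstacle.
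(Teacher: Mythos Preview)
Your argument is correct and follows essentially the same direct-counting approach as the paper: both proofs determine which piece is eaten by comparing the number of $0$'s or $1$'s in the two parts of a height-$k$ cut. The paper is in fact terser than you are---for each part it only verifies the single cut that produces the minimal surviving length and then asserts ``it is clear that $A_1$ is the smallest''---whereas your full case analysis on $k$ (together with the upside-down normalisation for even-length pieces starting with $1$, which the paper leaves implicit) actually pins down the entire range $\{2n,\dots,4n\}$, $\{2n+1,\dots,4n\}$, $\{2n+1,\dots,4n+2\}$ rather than just its lower endpoint.
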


\begin{proof}
$\mathrm{(i)}$ Let $A=(4n+1,1,0)$, $A_1=(2n,1,0)$, and $A_2=(2n+1,1,0)$. Then $A=A_1 \cup A_2$ and the number of $1$ in $A_1$ is the same as the number of $1$ in $A_2$, and hence you can eat $A_2$ to get $A_1$. It is clear that $A_1$ is the smallest chocolate bar  you can get from $A$.\\
$\mathrm{(ii)}$ 
Let $A=(4n+1,1,0)$, $A_1=(2n+1,1,0)$, $A_2=(2n,1,0)$
, $A^{\prime}_1=(2n,1,0)$, $A^{\prime}_2=(2n+1,1,0)$. Then 
we have $A=A_1 \cup A_2$ and $A=A^{\prime}_1 \cup A^{\prime}_2$.
The number of $0$ in $A_1$ is $n+1$, the number of $0$ in $A_2$ is $n$, 
the number of $0$ in $A^{\prime}_1$ is $n$, the number of $0$ in $A^{\prime}_2$ is $n+1$, and hence $R$ can eat $A_2$ to get $A_1$, but  $R$ cannot eat $A^{\prime}_2$ to get $A^{\prime}_1$.
It is clear that $A_1$ is the smallest chocolate bar  $R$ can get from $A$.\\
$\mathrm{(iii)}$ 
Let $A=(4n+3,1,0)$, $A_1=(2n+1,1,0)$, and $A_2=(2n+2,1,0)$. Then $A=A_1 \cup A_2$, and the number of $0$ in $A_1$ is $n+1$, and the number of $0$ in $A_2$ is $n+1$.
Therefore, $R$ can eat $A_2$ to get $A_1$. Clearly, $A_1$ is the smallest chocolate bar $R$ can get from $A$.
\end{proof}

\begin{defn}\label{partizanpposition}
Let $\mathcal{P}_{a}=\{(2^n(2p+5)-1, 2^m(2p+5)-1,s):n,m \in \mathbb{Z}_{\geq0}, p \in \mathbb{Z}_{\geq0} \text{ and } s =0,1\}$,
$\mathcal{P}_{b}= \{(2^n3-1,2^m4-1,s):n,m \in \mathbb{Z}_{\geq0} \text{ and } s =0,1\}$ and 
$\mathcal{P}_{c}= \{(2^n4-1,2^m3-1,s):n,m \in \mathbb{Z}_{\geq0} \text{ and } s =0,1\}$.
Let $\mathcal{P}=\mathcal{P}_{a} \cup \mathcal{P}_{b}\cup \mathcal{P}_{c}\cup \{(1,2,s),(2,1,s):s =0,1\}\cup \{(0,0)\}$.
\end{defn}

\begin{defn}\label{defofL} 
Let $\mathcal{L}_{1}=\{(2n+5, 1,0):n \in \mathbb{Z}_{\geq0} \}$,
$\mathcal{L}_{2}= \{(1,2n+5,0):n  \in \mathbb{Z}_{\geq0} \}$ and  $\mathcal{L}=\mathcal{L}_{1} \cup \mathcal{L}_{2}\cup \{(1,1,0)\}$.
\end{defn}

\begin{defn}\label{deofR}
Let $\mathcal{R}_{1}=\{(2n+5, 1,1):n \in \mathbb{Z}_{\geq0} \}$,
$\mathcal{R}_{2}= \{(1,2n+5,1):n  \in \mathbb{Z}_{\geq0} \}$ and $\mathcal{R}=\mathcal{R}_{1} \cup \mathcal{R}_{2}\cup \{(1,1,1)\}$.
\end{defn}

\begin{defn}\label{deofN}
Let $\mathcal{N}=\{(x,y,s):x,y \in \mathbb{N}, s =0,1 \text{ and } (x,y,s) \notin \mathcal{P} \cup \mathcal{L} \cup \mathcal{R}\}.$
\end{defn}

\begin{lemma}\label{belongtoN}
We have the following $(i)$, $(ii)$, $(iii)$, and $(iv)$ for the set $\mathcal{N}$:\\
$(i)$  We have   $(3,1,p), (1,3,p)  \in  \mathcal{N}$ for $p=0,1$.\\
$(ii)$ We have $(2n,1,p), (1,2n,p)  \in  \mathcal{N}$ for 
$n \in \mathcal{N}$ such that $n \geq 2$ and $p=0,1$.
\end{lemma}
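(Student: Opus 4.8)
The plan is to prove both parts by a direct membership computation. By Definition~\ref{deofN}, a triple $(x,y,s)$ with $x,y\in\mathbb{N}$ and $s\in\{0,1\}$ lies in $\mathcal{N}$ as soon as it lies in none of $\mathcal{P},\mathcal{L},\mathcal{R}$, so for each of $(3,1,p)$, $(1,3,p)$ in part $(i)$ and $(2n,1,p)$, $(1,2n,p)$ with $n\in\mathbb{N}$, $n\ge 2$ in part $(ii)$ (reading ``$n\in\mathcal{N}$'' in the hypothesis of $(ii)$ as ``$n\in\mathbb{N}$'', which is forced since $n$ is used as a scalar) it suffices to carry out three exclusion checks, against Definitions~\ref{partizanpposition}, \ref{defofL}, and \ref{deofR} respectively. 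Since the sets $\mathcal{P}$, $\mathcal{L}$, $\mathcal{R}$ are each symmetric under interchanging the two spatial coordinates (so $\mathcal{N}$ is as well), it is enough to treat $(3,1,p)$ and $(2n,1,p)$; the statements for $(1,3,p)$ and $(1,2n,p)$ follow immediately.

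For the exclusion from $\mathcal{P}$, the preliminary observation I would record first is that every element of $\mathcal{P}_a\cup\mathcal{P}_b\cup\mathcal{P}_c$ has both spatial coordinates at least $2$: indeed $2^m(2p+5)-1\ge 4$, $2^m 4-1\ge 3$ and $2^m 3-1\ge 2$ for all $m,p\in\mathbb{Z}_{\ge 0}$. Since $(3,1,p)$ and $(2n,1,p)$ have second coordinate $1$, neither can lie in $\mathcal{P}_a\cup\mathcal{P}_b\cup\mathcal{P}_c$; and since their first coordinates are $3$ and $2n\ge 4$ respectively, neither equals $(1,2,s)$, $(2,1,s)$ or $(0,0)$. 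Hence both triples avoid $\mathcal{P}$ entirely.

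For the exclusions from $\mathcal{L}$ and $\mathcal{R}$ I would use a parity-and-size observation on the first coordinate: the nonsingleton members of $\mathcal{L}_1$ and $\mathcal{R}_1$ have first coordinate $2n+5$, an odd number $\ge 5$; those of $\mathcal{L}_2$ and $\mathcal{R}_2$ have first coordinate $1$; and the remaining members are $(1,1,0)$ and $(1,1,1)$. The triple $(3,1,p)$ has first coordinate $3$, which is neither $1$ nor an odd number $\ge 5$; the triple $(2n,1,p)$ with $n\ge 2$ has even first coordinate $2n\ge 4$, again neither $1$ nor an odd number $\ge 5$; and neither triple is $(1,1,0)$ or $(1,1,1)$. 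So both triples avoid $\mathcal{L}\cup\mathcal{R}$, and combining the three exclusions with Definition~\ref{deofN} yields the lemma. I expect no genuine obstacle here: the argument is a finite case analysis, and the only thing demanding care is exhaustiveness—checking every component of $\mathcal{P}$ (namely $\mathcal{P}_a$, $\mathcal{P}_b$, $\mathcal{P}_c$, the two exceptional pairs, and $(0,0)$) and of $\mathcal{L}$ and $\mathcal{R}$ (namely $\mathcal{L}_1$, $\mathcal{L}_2$, $(1,1,0)$ and their $\mathcal{R}$-counterparts) so that no case is quietly skipped.
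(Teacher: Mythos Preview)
Your proposal is correct and follows essentially the same approach as the paper: both arguments show that a triple $(m,1,s)$ lies in $\mathcal{L}\cup\mathcal{R}$ only when $m=1$ or $m$ is odd and $\ge 5$, and in $\mathcal{P}$ only when $m=2$, whence $(3,1,p)$ and $(2n,1,p)$ with $n\ge 2$ fall into $\mathcal{N}$. Your write-up is simply more explicit about the individual components of $\mathcal{P}$, $\mathcal{L}$, $\mathcal{R}$ and invokes the coordinate-swap symmetry to avoid repeating the check for $(1,3,p)$ and $(1,2n,p)$.
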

\begin{proof}
By Definitions \ref{partizanpposition}, \ref{defofL}, \ref{deofR}, and \ref{deofN}
if $(n,1,s) \in \mathcal{L}$ or $\mathcal{R}$, $n=1$ or $n \geq 5$ and $n$ is odd.
If $(n,1,s) \in \mathcal{P}$, $n=2$. Therefore, we have $(i)$ and $(ii)$.
\end{proof}

\begin{lemma}\label{decideLorR}
We have the following $(i)$, $(ii)$, $(iii)$, and $(iv)$:\\
$(i)$  If $L$ starts with a position in $\mathcal{L}$, then she wins or can move to a position in $\mathcal{L} \cup \mathcal{P}$.\\
$(ii)$ If $R$ starts with a position in $\mathcal{R}$, then he wins or can move to a position in $\mathcal{R}\cup \mathcal{P}$.\\
$(iii)$  If $R$ starts with a position in $\mathcal{L}$, then he loses or always move to a position in $\mathcal{L} \cup \mathcal{N}$.\\
$(iv)$  If $L$ starts with a position in $\mathcal{R}$, then he loses or always move to a position in $\mathcal{R} \cup \mathcal{N}$.
\end{lemma}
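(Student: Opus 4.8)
The plan is to cut the work down to two cases by symmetry and then run a short case analysis on the thin bars that make up $\mathcal{L}$ and $\mathcal{R}$. First I would record two symmetries of the game. Transposing a bar (interchanging its height and width) fixes $\mathcal{P}$ and $\mathcal{N}$, interchanges $\mathcal{L}_1\leftrightarrow\mathcal{L}_2$ and $\mathcal{R}_1\leftrightarrow\mathcal{R}_2$, and does not affect the players' roles; interchanging the two colours (the matrix entries $0\leftrightarrow 1$) fixes $\mathcal{P}$ and $\mathcal{N}$, interchanges $\mathcal{L}\leftrightarrow\mathcal{R}$, and swaps the roles of $L$ and $R$. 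Hence it suffices to prove $(i)$ and $(iii)$ for positions in $\mathcal{L}_1\cup\{(1,1,0)\}$: then $(ii)$ and $(iv)$ follow from the colour symmetry, and the $\mathcal{L}_2,\mathcal{R}_2$ cases follow from the transpose symmetry.

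Next I would set up the only move structure that occurs. Every element of $\mathcal{L}\cup\mathcal{R}$ has an edge of length $1$, and a bar with an edge of length $1$ can only be cut along its long side, so a move from such a position again produces a bar with an edge of length $1$; we therefore never leave the world of thin bars. For the column $(\ell,1,0)$ with $\ell=2n+5$, i.e. the sequence $[0,1,0,\dots,0]$, a single cut keeps either a prefix, which is of the form $(j,1,0)$, or a suffix, which by parity of $\ell$ is $(j,1,0)$ for $j$ odd and $(j,1,1)$ for $j$ even; the mover then keeps whichever of the two pieces has the fewer blocks of his bitter colour, taking either one when they are equal. I would also read off from Definitions \ref{partizanpposition}--\ref{deofN} that, among bars with an edge of length $1$, the only members of $\mathcal{P}$ are $(2,1,0),(2,1,1),(1,2,0),(1,2,1)$, the members of $\mathcal{L}$ (resp. $\mathcal{R}$) are $(1,1,0)$ (resp. $(1,1,1)$) together with the ones of odd length $\ge 5$ and corner $0$ (resp. $1$), and everything else lies in $\mathcal{N}$; in particular $(3,1,s),(4,1,s)$ and all even lengths are in $\mathcal{N}$ by Lemma \ref{belongtoN}.

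For $(i)$: $L$ wins outright at $(1,1,0)$ by eating it (leaving $(0,0)\in\mathcal{P}$); from $(5,1,0)$ she cuts in the middle, $[0,1]\cup[0,1,0]$, a tie in black blocks, and keeps $(2,1,0)\in\mathcal{P}$; from $(2n+5,1,0)$ with $n\ge 1$ she cuts off the bottom two squares $[1,0]$ — which has one black against $n+1$ for the remainder, forcing her to eat it — and keeps $(2n+3,1,0)\in\mathcal{L}_1$. For $(iii)$: at $(1,1,0)$, $R$ cannot eat a white $1\times1$ bar and so loses. From $(2n+5,1,0)$ he always has a move, and by the description above every piece he could keep is some $(j,1,0)$ or some $(j,1,1)$ with $j$ even; comparing with the list in the previous paragraph, the only such pieces that lie in $\mathcal{P}\cup\mathcal{R}$ are $(2,1,0)$ and $(2,1,1)$ (note that every corner-$1$ suffix has even length, so no odd-length $\mathcal{R}$-position of length $\ge 5$ is reachable, and the length-$1$ prefix and suffix are both white, so $(1,1,1)$ is not reachable either). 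Finally $(2,1,0)$ occurs only as the prefix $[0,1]$ and $(2,1,1)$ only as the suffix $[1,0]$, and in both cases the complementary piece is a block of odd length $\ge 3$ starting with a white square, hence has at least two white squares against the one of the length-two piece; so $R$ is forced to eat the length-two piece and can never keep it. Consequently every move of $R$ from $(2n+5,1,0)$ lands in $\mathcal{L}\cup\mathcal{N}$, which is what $(iii)$ asserts.

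I expect the main obstacle to be the universally quantified direction in $(iii)$ and $(iv)$: one has to be certain that the list of pieces reachable by a single cut is complete — in particular that the colour-flipped suffixes are included, which the statement of Lemma \ref{lemmaforheight1} does not make explicit — and then use the parity of the block lengths to exclude the odd, wrong-coloured bars belonging to $\mathcal{R}$, and the forced-eating argument to exclude $(2,1,0)$ and $(2,1,1)$. The remaining steps are routine appeals to Lemma \ref{belongtoN} and to the definitions of the four outcome sets.
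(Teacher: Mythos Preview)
Your proposal is correct and follows essentially the same route as the paper's proof: a direct case analysis on the thin bars making up $\mathcal{L}$ and $\mathcal{R}$, with $(ii)$ and $(iv)$ obtained from $(i)$ and $(iii)$ by symmetry (the paper simply writes ``by a method that is similar'').

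The one point worth noting is that your treatment of $(iii)$ is in fact more careful than the paper's. The paper appeals to Lemma~\ref{lemmaforheight1} to list $R$'s options from $(2n+5,1,0)$ as a range of bars all with corner colour $0$, and then observes that each has first coordinate at least $3$. But, as you anticipate in your final paragraph, a cut of an odd-length column can leave a suffix whose corner colour is $1$; for instance from $(5,1,0)$, cutting after row $1$ forces $R$ to keep $(4,1,1)$, which is not on the paper's list. Your argument handles these extra options explicitly: every corner-$1$ suffix has even length (hence lies in $\mathcal{N}$ by Lemma~\ref{belongtoN}), the only length-$1$ pieces have corner $0$, and the two $\mathcal{P}$-positions $(2,1,0)$ and $(2,1,1)$ are excluded by a direct count of white squares. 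So your version actually closes a small gap that the paper's enumeration leaves open, while remaining the same argument in spirit.
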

\begin{proof}
$\mathrm{(i)}$ If $L$ starts with a position $(2n+5, 1,0)$ with $n \in \mathbb{N}$, then she can move to  the position $(2(n-1)+5, 1,0) \in \mathcal{L}$.
If she starts with $(5,1,0)$, then by $(i)$ of Lemma \ref{lemmaforheight1} she can move to  the position $(2, 1,0) \in \mathcal{P}$. If she starts with $(1,1,0)$, then she wins by eating this last piece.\\
$\mathrm{(ii)}$ By the method that is similar to the one used in $\mathrm{(i)}$, we prove $(ii)$.\\
$\mathrm{(iii)}$ Suppose that $R$ starts with a position in $\mathcal{L}$.\\
$\mathrm{(iii.1)}$ If he starts with a position $(2n+5, 1,0) \in \mathcal{L}_1$ with  $n \in \mathbb{Z}_{\geq0}$, 
there are two cases.\\
$\mathrm{(iii.1.1)}$ If $n=2m$ for $m \in \mathbb{Z}_{\geq0}$, then $2n+5=4(m+1)+1$.
By $(ii)$ of Lemma \ref{lemmaforheight1}, he moves to one of
\begin{equation}
(2m+3,1,0), \cdots, (4m+4,1,0).\label{moveto1} 
\end{equation}
$\mathrm{(iii.1.2)}$ If $n=2m+1$ for $m \in \mathbb{Z}_{\geq0}$, then $2n+5=2(2m+1)+5 = 4(m+1)+3$.
By $(iii)$ of Lemma \ref{lemmaforheight1}, he moves to one of 
\begin{equation}
(2m+3,1,0), \cdots, (4m+6,1,0).\label{moveto2} 
\end{equation}
For any position in  (\ref{moveto1}) and (\ref{moveto2}), the 1st coordinate is larger than or equal to $3$.
Therefore, by Definition \ref{defofL} and Lemma \ref{belongtoN},
any chocolate bar in (\ref{moveto1}) and (\ref{moveto2}) belongs to $\mathcal{L}$ when the 1st coordinate is odd and bigger than 5, and it belongs to $\mathcal{N}$ when the 1st coordinate is three or  an even number bigger than 4.\\
$\mathrm{(iii.2)}$ If he starts with a position in $ \mathcal{L}_2$, we can prove by a method that is similar to the one used in $\mathrm{(iii.1)}$.\\
$\mathrm{(iii.3)}$ If he starts with a position $(1,1,0)$, he loses in the game.\\
$\mathrm{(iv)}$ By the method that is similar to the one used in $\mathrm{(iii)}$, we prove $(iv)$.
\end{proof}

\begin{lemma}\label{frompton}
If we start the game with a chocolate bar $(x,y,s) \in \mathcal{P}$ with $x,y \in \mathbb{N}$ and $s=0,1$, then any move leads to a chocolate bar  $(v,w,t) \notin  \mathcal{P}$, where $v,w \in \mathbb{N}$ and $t=0,1$.
\end{lemma}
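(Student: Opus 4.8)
The plan is to reduce the statement to the Grundy analysis of the Maximum Nim of Section~\ref{maxnim}; write $\mathcal{G}$ for the Grundy number of that game (rule function $f(n)=\lfloor n/2\rfloor$). Three observations set things up. First, membership of a position in $\mathcal{P}_a$, $\mathcal{P}_b$ or $\mathcal{P}_c$ does not depend on the stripe parameter, so the third coordinate is irrelevant below. Second, by Definition~\ref{partizanpposition} together with Lemma~\ref{lemmaforgrundymax}, the sets $\{2^n(2p+5)-1\}$, $\{2^n3-1\}$ and $\{2^n4-1\}$ are precisely the integers with Grundy value $\ge 2$, those with Grundy value $1$, and those $\ge 2$ with Grundy value $0$; consequently, for $v,w\ge 2$ one has $(v,w,t)\in\mathcal{P}$ if and only if $\mathcal{G}(v)=\mathcal{G}(w)\ge 2$ or $\{\mathcal{G}(v),\mathcal{G}(w)\}=\{0,1\}$, while the only elements of $\mathcal{P}$ with a coordinate equal to $1$ are $(1,2,s)$ and $(2,1,s)$. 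Third, the transpose $(x,y,s)\mapsto(y,x,s)$ is an automorphism of the game (it preserves the number of black and of white blocks of every bar and interchanges horizontal and vertical cuts) that, by Definition~\ref{partizanpposition}, carries $\mathcal{P}$ onto itself ($\mathcal{P}_a\to\mathcal{P}_a$, $\mathcal{P}_b\leftrightarrow\mathcal{P}_c$, $(1,2,s)\leftrightarrow(2,1,s)$); hence it suffices to rule out horizontal cuts.

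The exceptional positions are immediate: $(0,0)$ is terminal, and the only move from $(1,2,s)$ or $(2,1,s)$ produces a $1\times 1$ bar $(1,1,t)$, which is not in $\mathcal{P}$. So suppose $(x,y,s)\in\mathcal{P}_a\cup\mathcal{P}_b\cup\mathcal{P}_c$, so that $x,y\ge 2$, and make a horizontal cut with either player. Since the resulting pieces have width $y\ge 2$, Lemma~\ref{morethanhalf} and Remark~\ref{remarkbigsmall} force the mover to eat the strictly smaller piece, or one of two equal halves; in every case the surviving bar is $(v,y,t)$ with $v=\max(a,x-a)$ for some $1\le a\le x-1$, that is, $v\in\{\lceil x/2\rceil,\dots,x-1\}$, which is exactly $\textit{move}(x)$ for the Maximum Nim of $x$ stones. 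One more elementary fact is needed: $\mathcal{G}(v)\ne\mathcal{G}(x)$ whenever $v\in\textit{move}(x)$. Indeed, by Lemma~\ref{lemmaforgrundymax} two integers of common Grundy value $n$ take the form $2^a(2n+1)-1<2^b(2n+1)-1$ with $a<b$, so the smaller one is at most $(x-1)/2<\lceil x/2\rceil$, hence below $\textit{move}(x)$.

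It remains to check $(v,y,t)\notin\mathcal{P}$. If $v=1$ then $x=2$, which forces $(x,y,s)\in\mathcal{P}_b$ and $y\ge 3$, and then $(1,y,t)\notin\mathcal{P}$ because the only height-one position in $\mathcal{P}$ is $(1,2,s)$. If $v\ge 2$, use the criterion above. When $(x,y,s)\in\mathcal{P}_a$ we have $\mathcal{G}(y)=\mathcal{G}(x)\ge 2$, so $(v,y,t)\in\mathcal{P}$ would demand $\mathcal{G}(v)=\mathcal{G}(y)=\mathcal{G}(x)$, which is impossible. When $(x,y,s)\in\mathcal{P}_b\cup\mathcal{P}_c$ we have $\{\mathcal{G}(x),\mathcal{G}(y)\}=\{0,1\}$ with $\mathcal{G}(y)\le 1$, so $(v,y,t)\in\mathcal{P}$ would demand that $\mathcal{G}(v)$ be the member of $\{0,1\}$ distinct from $\mathcal{G}(y)$, namely $\mathcal{G}(x)$, again impossible. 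Thus no horizontal cut from a position in $\mathcal{P}$ lands in $\mathcal{P}$, and by the transpose symmetry the same holds for vertical cuts, which proves the lemma.

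I expect the main obstacle to be the boundary bookkeeping rather than the Nim computation (which is a one-line consequence of Lemma~\ref{lemmaforgrundymax}): one must ensure that a move from a position in $\mathcal{P}$ with $x,y\ge 2$ cannot slip into the ``thin'' part of $\mathcal{P}$, i.e. into $(1,2,s)$ or $(2,1,s)$ --- this is why it matters that $v=1$ already forces $x=2$ and hence $y\ge 3$ inside $\mathcal{P}_b$ --- and one must make sure that the width hypothesis of Lemma~\ref{morethanhalf} is genuinely met, which it is because a horizontal cut leaves the width $y\ge 2$ unchanged.
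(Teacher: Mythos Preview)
Your proof is correct, and it takes a genuinely different route from the paper. The paper argues by direct case analysis: starting from each of $\mathcal{P}_a$, $\mathcal{P}_b$, $\mathcal{P}_c$ and the sporadic positions, it computes the heights of the two pieces after a cut, uses the explicit arithmetic inequality $2^n c - 1 - (2^k c - 1) > 2^k c - 1$ for $k<n$ to show via Lemma~\ref{morethanhalf} that the eaten piece would have to be the larger one, and then rules out cross-type landings (e.g.\ $\mathcal{P}_a\to\mathcal{P}_b$) by noting that $2p+5$, $3$, and $4$ have pairwise distinct odd parts. Your argument instead recasts $\mathcal{P}_a\cup\mathcal{P}_b\cup\mathcal{P}_c$ as a Grundy condition on the coordinates via Lemma~\ref{lemmaforgrundymax}, observes that the surviving height after any legal horizontal cut lies in $\textit{move}(x)$ for the Maximum Nim of Section~\ref{maxnim}, and then invokes the mex property (made explicit through the $2^a(2n+1)-1$ parametrisation) to conclude $\mathcal{G}(v)\ne\mathcal{G}(x)$; the transpose symmetry disposes of vertical cuts in one stroke. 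This buys a uniform treatment of all three families $\mathcal{P}_a,\mathcal{P}_b,\mathcal{P}_c$ and replaces several ad hoc arithmetic checks with a single structural observation, at the cost of relying on Section~\ref{maxnim} machinery that the paper's proof does not need. Your handling of the boundary cases (the $(1,2,s)$/$(2,1,s)$ positions and the possibility $v=1$ forcing $x=2\in\mathcal{P}_b$ with $y\ge 3$) is clean and matches what the paper does in its case~(iv).
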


\begin{proof}
$\mathrm{(i)}$ Suppose that we start with the position 
\begin{equation}
A=(2^n(2p+5)-1, 2^m(2p+5)-1,s) \in \mathcal{P}_{a}\label{axyp}
\end{equation}
for some $n,m, p \in \mathbb{Z}_{\geq0}$ and $ s =0,1$.
It is sufficient to consider the case when we reduce the first coordinate. 
We prove this by contradiction. We assume that we cut $A$ into $A_1$ and $A_2$, and eat $A_2$ to get $A_1$. \\
$\mathrm{(i.1)}$ Suppose that
$A_1= (2^k(2p+5)-1, 2^m(2p+5)-1,s) \in \mathcal{P}_{a}$ for $k \in \mathbb{Z}_{\geq0}$ such that $k < n$.
Then the first coordinate of $A_2$ is 
\begin{align}
2^n(2p& + 5)-1-(2^k(2p+5)-1) \nonumber \\
&  = (2^n-2^k)(2p+5)  \nonumber \\
&  \geq 2^{n-1}(2p+5)  \nonumber \\
&  > 2^{k}(2p+5)-1. \label{biggerthanhalf}
\end{align}	
By (\ref{biggerthanhalf}) and Lemma \ref{morethanhalf}, the number of $0$ and $1$ in $A_2$ are larger than those in $A_1$.
This contradicts the definition of the game.
Therefore, 
$A_1 \notin  \mathcal{P}_{a}$.\\
$\mathrm{(i.2)}$ Suppose that 
$A_1 \in \mathcal{P}_{b}\cup \mathcal{P}_{c}$. Then,
the second coordinate of $A_1$ is 
\begin{equation}2^m(2p+5)-1=2^k4-1 \label{impossiblemk4}
\end{equation}
or
\begin{equation}
2^m(2p+5)-1=2^k3-1 \label{impossiblemk3}
\end{equation}
for some $k \in \mathbb{N}$, but (\ref{impossiblemk3}) and (\ref{impossiblemk4}) are impossible, 
since $2p+5$ is an odd number larger than or equal to 5. Therefore,
any move does not lead to a position in 
$ \mathcal{P}_{b} \cup \mathcal{P}_{c}$. 

By (\ref{axyp}), the height and the width of $A$ is larger than or equal to $4$, 
any move does not lead to $(1,2,s)$ or $(2,1,s)$ for $s =0,1$.\\
$\mathrm{(ii)}$. Next, we start with the position 
\begin{equation}
A=(x,y)=(2^n3-1,2^m4-1,s) \in \mathcal{P}_{b}\label{axy34}
\end{equation}
 for $n,m \in \mathbb{Z}_{\geq0} \text{ and } s =0,1$.
We cut $A$ into $A_1$ and $A_2$, and eat $A_2$ to get $A_1$. \\

By (\ref{axy34}), we have $x \geq 2$ and $y \geq 3$, and hence 
$A_1 \ne (1,2,s)$ nor $(2,1,s)$ for $s =0,1$. Therefore, we only have to prove that 
 $A_1 \notin \in \mathcal{P}_{a} \cup \mathcal{P}_{b} \cup \mathcal{P}_{c}$.\\
$\mathrm{(ii,1)}$.
First, we reduce the first coordinate.\\
$\mathrm{(ii,1.1)}$.Suppose that 
 $A_1=(2^k3-1, 2^m4-1,s) \in \mathcal{P}_{b}$ for  $k \in \mathbb{Z}_{\geq0}$ such that $k < n$. Then the first coordinate of $A_2$ is 
\begin{align}
 2^n3-&1-(2^k3-1) \nonumber \\
&  \geq 2^{n-1}3   \nonumber \\
&  > 2^k3-1. \label{biggerthanhalf2}
\end{align}	
By (\ref{biggerthanhalf2}) and Lemma \ref{morethanhalf}, the number of $0$ and $1$ in $A_2$ are larger than those in $A_1$.
This contradicts the definition of the game, and hence
 $A_1 \notin \mathcal{P}_{b}$. \\
$\mathrm{(ii,1.2)}$.
If $A_1 \in \mathcal{P}_{a}$, the second coordinate of $A_1$ is
\begin{equation}
2^m4-1=2^k(2p+5)-1 \nonumber
\end{equation}
for $k,p \in \mathbb{Z}_{\geq0}$. This is impossible, and we have $A_1 \notin \mathcal{P}_{a}$.\\
$\mathrm{(ii,1.3)}$.
If $A_1 \in \mathcal{P}_{c}$, the second coordinate of $A_1$ is
\begin{equation}
2^m4-1=2^k3-1 \label{impossiblem43}
\end{equation}
for some  $k \in \mathbb{Z}_{\geq0}$, and this is impossible, and $A_1 \notin \mathcal{P}_{c}$

Next, we reduce the second coordinate.\\
$\mathrm{(ii,2.1)}$ Suppose that 
 $A_1=(2^m3-1, 2^k4-1,s) \in \mathcal{P}_{b}$ for $k < n$ such that $k \in \mathbb{Z}_{\geq0}$, then the first coordinate of $A_2$ is 
\begin{align}
 2^m4-&1-(2^k4-1) \nonumber \\
&  \geq 2^{m-1}4   \nonumber \\
&  > 2^k4-1. \nonumber
\end{align}	
By (\ref{biggerthanhalf2}) and Lemma \ref{morethanhalf}, the number of $0$ and $1$ in $A_2$ are larger than those in $A_1$.
This contradicts the definition of the game. Therefore,
 $A_1=(2^m3-1, 2^k4-1,s) \notin \mathcal{P}_{b}$.\\
$\mathrm{(ii,2.2)}$
If $A_1 \in \mathcal{P}_{a}$, we have 
\begin{equation}
2^m3-1=2^k(2p+5)\nonumber
\end{equation}
for $k,p \in \mathbb{Z}_{\geq0}$. This is impossible, and we have $A_1 \notin \mathcal{P}_{a}$.\\
$\mathrm{(ii,2.3)}$
If $A_1 \in \mathcal{P}_{c}$, we have 
\begin{equation}
2^m3-1=2^k4-1 \nonumber
\end{equation}
for some  $k \in \mathbb{Z}_{\geq0}$, and this is impossible.
Therefore, we have $A_1 \notin \mathcal{P}_{c}$.\\
$\mathrm{(iii)}$
If  we start with the position $(2^n4-1,2^m3-1,s) \in \mathcal{P}_{c}$  for $n,m \in \mathbb{Z}_{\geq0} \text{ and } s =0,1$. Then, we can use a method very similar to the one used in $\mathrm{(ii)}$.\\
$\mathrm{(iv)}$
If  we start with the position $(2,1,s)$ or $(1,2,s) \in \mathcal{P}_{c}$ for $s = 0,1$, then
$L$ moves to $(1,1,1) \in \mathcal{R}$ and $R$ moves to $(1,1,0) \in \mathcal{L}$.
\end{proof}

\begin{lemma}\label{lemmaforP}
We have the following $(i)$ and $(ii)$:\\
$(i)$  If $L$ starts with a position in $\mathcal{P}$, then she always moves to a position in $\mathcal{R} \cup \mathcal{N}$.\\
$(ii)$ If $R$ starts with a position in $\mathcal{P}$, then he always moves to a position in $\mathcal{L}\cup \mathcal{N}$.
\end{lemma}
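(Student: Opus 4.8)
The plan is to mirror the structure of Lemma \ref{frompton}: I will show that from any position in $\mathcal{P}$, a move by $L$ lands in $\mathcal{R}\cup\mathcal{N}$ and a move by $R$ lands in $\mathcal{L}\cup\mathcal{N}$. By Definition \ref{deofN}, a position is in $\mathcal{R}\cup\mathcal{N}$ exactly when it is not in $\mathcal{P}\cup\mathcal{L}$, and likewise $\mathcal{L}\cup\mathcal{N}$ is the complement of $\mathcal{P}\cup\mathcal{R}$. Lemma \ref{frompton} already tells me that no move from $\mathcal{P}$ reaches $\mathcal{P}$, so for part $(i)$ it remains only to rule out $L$ reaching a position in $\mathcal{L}$, and for part $(ii)$ only to rule out $R$ reaching a position in $\mathcal{R}$.

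First I would enumerate the cases according to which piece of $\mathcal{P}$ the starting position $(x,y,s)$ lies in: $\mathcal{P}_a$, $\mathcal{P}_b$, $\mathcal{P}_c$, the small positions $(1,2,s),(2,1,s)$, or $(0,0)$. In the bulk cases ($\mathcal{P}_a,\mathcal{P}_b,\mathcal{P}_c$) both coordinates are at least $3$, so by Lemma \ref{morethanhalf} and Remark \ref{remarkbigsmall} any resulting piece still has both coordinates at least $1$ and — crucially — a dimension larger than $1$ in the uncut direction; hence it cannot be one of the one-dimensional positions $(2n+5,1,0)$ or $(1,2n+5,0)$ that make up $\mathcal{L}_1\cup\mathcal{L}_2$, nor can it equal $(1,1,0)$. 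Thus for these cases membership in $\mathcal{L}$ (or $\mathcal{R}$) is impossible purely on dimensional grounds, and combined with Lemma \ref{frompton} the resulting position lies in $\mathcal{N}$. For the boundary cases $(1,2,s)$ and $(2,1,s)$, the only moves available are to $(1,1,1)$ or $(1,1,0)$; here I use that $L$ eating the part with fewer black blocks from $(1,2,s)$ or $(2,1,s)$ forces her to leave $(1,1,1)\in\mathcal{R}$, while $R$ is forced to leave $(1,1,0)\in\mathcal{L}$ — exactly the computation at the end of Lemma \ref{frompton}, item $(iv)$, which I can cite.

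The case $(0,0)$ is the terminal-type position with no moves, so the quantified statement "$L$ always moves to $\mathcal{R}\cup\mathcal{N}$" is vacuously true (and likewise for $R$); I would note this explicitly to close the enumeration. Assembling the pieces: in every case any $L$-move from $\mathcal{P}$ avoids $\mathcal{P}$ (Lemma \ref{frompton}) and avoids $\mathcal{L}$ (dimension argument, or forced move to $\mathcal{R}$ in the boundary case), hence lands in $\mathcal{R}\cup\mathcal{N}$; symmetrically for $R$.

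I expect the main obstacle to be airtight bookkeeping in the boundary cases rather than any conceptual difficulty: one must be careful that the "fewer black blocks / fewer white blocks" rule genuinely forces the claimed move when the two halves of $(1,2,s)$ or $(2,1,s)$ are both $1\times 1$ but of opposite colors, and that the case $s=0$ versus $s=1$ is handled on both sides. A secondary subtlety is making sure the dimensional argument for $\mathcal{P}_a,\mathcal{P}_b,\mathcal{P}_c$ is stated correctly — specifically that cutting never produces a piece with a "$1$" in the coordinate that was \emph{not} cut, so that one truly cannot fall into the strictly-one-dimensional families defining $\mathcal{L}$ and $\mathcal{R}$; this is immediate from the definition of a cut but worth spelling out once.
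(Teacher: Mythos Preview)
Your overall plan mirrors the paper's: invoke Lemma~\ref{frompton} to rule out landing in $\mathcal{P}$, then argue separately that $L$ cannot land in $\mathcal{L}$. The boundary cases $(1,2,s),(2,1,s)$ and the vacuous case $(0,0)$ are fine. The genuine gap is in your treatment of $\mathcal{P}_a\cup\mathcal{P}_b\cup\mathcal{P}_c$.

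Your premise that ``both coordinates are at least $3$'' is false: in $\mathcal{P}_b$ the first coordinate is $2^n\cdot 3-1$, which equals $2$ when $n=0$; symmetrically the second coordinate of a position in $\mathcal{P}_c$ can equal $2$. Take for instance $A=(2,\,2^m\cdot 4-1,\,s)\in\mathcal{P}_b$ with $m\ge 1$. A horizontal cut splits $A$ into two one-row pieces $(1,\,2^m\cdot 4-1,\,\cdot)$, and since $2^m\cdot 4-1=2(2^{m+1}-3)+5$ one of those pieces is $(1,2k+5,0)\in\mathcal{L}_2$. Your ``purely dimensional'' argument cannot exclude $L$ retaining that piece; Lemma~\ref{morethanhalf} says nothing here because the two halves have equal size. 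What actually rules it out is the color-counting rule: because $2k+5$ is odd, the row $(1,2k+5,1)$ has strictly more $1$'s than $(1,2k+5,0)$, so $L$ must eat $(1,2k+5,0)$ and keep $(1,2k+5,1)\in\mathcal{R}$. This is exactly the computation the paper performs in case~(i.2.1), and it is not a cosmetic detail---without it the argument for $\mathcal{P}_b$ (and symmetrically $\mathcal{P}_c$) is incomplete. Once you add this color-count step for the width-$2$ and height-$2$ positions, the rest of your outline matches the paper's proof.
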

\begin{proof}
$\mathrm{(i)}$ If $L$ starts with a position in $\mathcal{P}$, by Lemma \ref{frompton}  she cannot move to  a position in $\mathcal{P}$.
Therefore, she always move to a position in $\mathcal{L} \cup \mathcal{R} \cup \mathcal{N}$.

It is sufficient to show that she cannot move to  a position in $\mathcal{L}$. \\
$\mathrm{(i.1)}$ If $L$ starts with $(1,2,s)$ or $(2,1,s)$ with $s = 1,2$, then she always move to $(1,1,1) \in \mathcal{R}$.\\
$\mathrm{(i.2)}$ Suppose that $L$ starts with $A \in \mathcal{P}_{a} \cup \mathcal{P}_{b} \cup \mathcal{P}_{c}$, and moves to the position $B \in \mathcal{L}$. \\
$\mathrm{(i.2.1)}$ 
For $B=(2n+5,1,0)$ or $B=(1,2n+5,0)\in \mathcal{L}$, we prove only the case that 
$B=(2n+5,1,0)\in \mathcal{L}$. By Lemma \ref{morethanhalf} we have
\begin{equation}
 A=(2n+5,2,s) \label{aisby2}
\end{equation}
for $s=1,2$
or
\begin{equation}
A=(m,1,s) \label{aism1}
\end{equation}
for $m \in \mathbb{N}$ such that $m >2n+5$ and $p=0,1$.

When we have (\ref{aisby2}), there is $B^{\prime}=(2n+5,1,1)$ such that $A=B \cup B^{\prime}$. 
Since the number of $1$ in $B^{\prime}$ is larger than that of $B$, by the rule of the game, $L$ cannot eat $B^{\prime}$ to get $B$.
this contradicts the rule of the game.

When we have (\ref{aism1}), by Definition \ref{partizanpposition} we have $A \notin \mathcal{P}$. This leads to a contraction.\\
$\mathrm{(i.2.2)}$ If $B=(1,1,0)$, then $A=(2,1,1) = B \cup (1,1,1)$ or $A=(1,2,1) = B \cup (1,1,1)$. Here, we have a contradiction since $L$ cannot eat $(1,1,1)$.\\
$\mathrm{(ii)}$ We prove $(ii)$ by a method that is similar to the one used in $\mathrm{(i)}$.
\end{proof}

\begin{lemma}\label{lemmaforN}
We have the following $(i)$ and $(ii)$:\\
$(i)$ If $L$ starts the game with a position $(x,y,s) \in \mathcal{N}$, then she can move to a position $(v,w,t) \in  \mathcal{P}\cup \mathcal{L}$.\\
$(ii)$ If $R$ starts the game with a position $(x,y,s) \in \mathcal{N}$, then he can move to a position $(v,w,t) \in  \mathcal{P}\cup \mathcal{R}$.
\end{lemma}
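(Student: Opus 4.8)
The plan is to proceed by a case split on whether both coordinates of the $\mathcal{N}$-position $(x,y,s)$ are at least $2$ or one of them equals $1$; in each case it is enough to exhibit a single legal move into the target class, and since interchanging the two colours carries $\mathcal{L}$ to $\mathcal{R}$ and fixes $\mathcal{P}$, it suffices to argue for the move of $L$.

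\emph{Interior case $x,y\ge 2$.} First note, via Lemma~\ref{morethanhalf}, that a horizontal cut retaining a piece of height $a$ is legal for either player exactly when $a\in\{\lceil x/2\rceil,\dots,x-1\}$, and a vertical cut retaining width $b$ exactly when $b\in\{\lceil y/2\rceil,\dots,y-1\}$; since $\mathcal{P}_a,\mathcal{P}_b,\mathcal{P}_c$ impose no condition on the third coordinate, it may be ignored apart from the position $(2,2,s)$ below. Hence on the height and on the width the legal moves coincide with the moves of the maximum Nim of Lemma~\ref{grundylemmaformax}, so from $x$ one can reach heights realising every value in $\{0,\dots,\mathcal{G}(x)-1\}$ as a Grundy number, and likewise for $y$. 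Write $g=\mathcal{G}(x)$, $h=\mathcal{G}(y)$; by Lemma~\ref{lemmaforgrundymax} the hypothesis $(x,y,s)\in\mathcal{N}$ rules out $g=h\ge 2$, $(g,h)=(1,0)$ and $(g,h)=(0,1)$. If $g\ne h$, say $g>h$ (the case $h>g$ being symmetric under interchanging the coordinates and $\mathcal{P}_b\leftrightarrow\mathcal{P}_c$), then $g\ge 2$, so $x\ge 4$ and every reachable height is $\ge 2$; I would lower the height to some $x'$ with $\mathcal{G}(x')=h$ if $h\ge 2$ (reaching $\mathcal{P}_a$), with $\mathcal{G}(x')=0$ if $h=1$ (reaching $\mathcal{P}_c$), or with $\mathcal{G}(x')=1$ if $h=0$ (reaching $\mathcal{P}_b$), each target value lying in $\{0,\dots,g-1\}$. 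If $g=h=0$, then $x$ and $y$ are of the form $2^{j}-1$ with $j\ge 2$; lowering the width $y=2^{j}-1$ to $3\cdot 2^{j-2}-1$ (which is in range and has Grundy number $1$) reaches $\mathcal{P}_c$, since $x$ is of the form $2^{i}4-1$. If $g=h=1$, then $x,y\in\{2,5,11,\dots\}$: when $x\ge 5$, writing $x=2^{k}3-1$, lower the height to $2^{k+1}-1$ to reach $\mathcal{P}_c$; when $x=2<y$, lower the width symmetrically to reach $\mathcal{P}_b$; and when $x=y=2$, the unique height-halving cut of $(2,2,s)$ gives $(1,2,0)$ and $(1,2,1)$, so eating the one with fewer black blocks retains $(1,2,t)\in\mathcal{P}$. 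In each of these $L$ reaches $\mathcal{P}$, and $R$ analogously; in particular the $\mathcal{L}$ (resp.\ $\mathcal{R}$) summand is not needed in this case.

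\emph{Boundary case $y=1$} (the case $x=1$ is identical after exchanging widths and heights and using $\mathcal{L}_2,\mathcal{R}_2$). By Lemma~\ref{belongtoN}, $(x,1,s)\in\mathcal{N}$ forces $x=3$ or $x=2k$ with $k\ge 2$. From $(3,1,s)$, discarding a single $1\times 1$ block leaves $(2,1,s)\in\mathcal{P}$; this is legal for each player because the discarded block has at most one black and at most one white block, while $(2,1,s)$ has exactly one of each. From $(4,1,s)$, halving the height retains $(2,1,s)\in\mathcal{P}$. For $x=2k$ with $k\ge 3$ one can no longer reach $\mathcal{P}$, since every reachable height is $>2$; here Lemma~\ref{lemmaforheight1} together with a direct block count is needed. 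I would pick an odd $m$ with $k+1\le m\le 2k-1$ (such $m$ exists when $k\ge 3$ and is automatically $\ge 5$); the $m$-row piece at whichever end of $(2k,1,s)$ begins with colour $0$ is $(m,1,0)$, its complement is $(2k-m,1,1)$, and the inequality $m\ge k+1$ makes the complement's black-block count $(2k-m+1)/2$ at most the kept piece's black-block count $(m-1)/2$; hence $L$ may eat the complement and retain $(m,1,0)\in\mathcal{L}_1$. For $R$ the colour-swapped argument retains some $(m,1,1)\in\mathcal{R}_1$.

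I expect the main obstacle to be exactly this last subcase $x=2k$, $k\ge 3$ (and its $y=1$ mirror): there Lemma~\ref{morethanhalf} no longer reduces ``which piece may be eaten'' to ``which piece is smaller'', so one must argue with the exact black- and white-block counts as in Lemma~\ref{lemmaforheight1}, checking simultaneously that some admissible cut exists and that the retained piece carries the first colour placing it in $\mathcal{L}_1$ for $L$ (resp.\ $\mathcal{R}_1$ for $R$). The remaining cases reduce cleanly to the maximum-Nim Grundy calculus of Section~\ref{maxnim} and the explicit description of $\mathcal{P}$.
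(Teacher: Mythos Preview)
Your proof is correct and takes a genuinely different route from the paper's.

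The paper argues by the explicit partition of $\mathbb{N}$ in Lemma~\ref{allnatural}: it splits on whether $x$ lies in $\{1\}$, $\mathbb{N}_1=\{2^s(2p+5)-1\}$, $\mathbb{N}_2=\{2^s\cdot4-1\}$, or $\mathbb{N}_3=\{2^s\cdot3-1\}$, and in each case exhibits by hand an inequality of the form $0<y-(2^k c-1)\le 2^k c-1$ (or the analogous one on $x$) that lets the player keep the larger piece; the subcase $y<2p+4$ then recurses on $y$, producing a fairly long nested case analysis. Your argument instead observes that for $x,y\ge 2$ the legal cuts for either player are exactly the moves of the maximum Nim with rule $f(n)=\lfloor n/2\rfloor$, and that, restricted to this region, $\mathcal{P}_a\cup\mathcal{P}_b\cup\mathcal{P}_c$ is precisely the set of pairs $(x,y)$ whose max-Nim Grundy values $(g,h)$ satisfy $g=h\ge 2$ or $\{g,h\}=\{0,1\}$. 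The problem then collapses to a short Grundy-value argument (reach $h$, or reach $0$ or $1$, from $g$), with only the diagonal cases $g=h\in\{0,1\}$ and the boundary $y=1$ needing explicit moves. This is cleaner and explains \emph{why} $\mathcal{P}$ has the shape it does; the paper's approach, on the other hand, is entirely self-contained and does not rely on the mex property or on recognising the max-Nim structure of the interior moves.
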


\begin{proof}
$\mathrm{(i)}$
Suppose that $L$ start with the position $A=(x,y,s) \in \mathcal{N}$.
Since $x \in \mathbb{N}$, by Lemma \ref{allnatural}, we have the following cases $\mathrm{(i.1)}$, $\mathrm{(i.2)}$, $\mathrm{(i.3)}$ and $\mathrm{(i.4)}$.\\
$\mathrm{(i.1)}$
Suppose that $x=1$. Then, we start with the position $A=(1,y,s) \in \mathcal{N}$.

By Definition \ref{partizanpposition}, 
Definition \ref{defofL} and Definition \ref{deofR}, we have
$A \ne (1,2,s)$, $A \ne (1,1,s)$ and $A \ne (1,2n+5,s)$ for any $n \in \mathbb{Z}_{\geq0}$ and $s=0,1$. Therefore, we have $A = (1,3,s)$, $(1,4,s)$ or $(1,2n+6,s)$ for $n \in \mathbb{Z}_{\geq0}$ and $s=0,1$.

If $(1,y,s) = (1,3,s)$ or $(1,4,s)$, then $L$ can move to $(1,2,s)$.

If $(1,y,s) = (1,2n+6,s)$, $L$ can move to $(1,2n+5,0)\in \mathcal{L}$.\\
$\mathrm{(i.2)}$
Suppose that $x = 2^n 4-1$ for some $n \in \mathbb{Z}_{\geq0}$.
Then, we have the following two cases $\mathrm{(i.2.1)}$ and $\mathrm{(i.2.2)}$.\\
$\mathrm{(i.2.1)}$
Suppose that there exists $k \in \mathbb{Z}_{\geq0}$ such that
\begin{equation}
2^k3-1 < y < 2^{k+1}3-1.\nonumber
\end{equation}
Since
\begin{equation}
 y \leq 2^{k+1}3-2,\nonumber
\end{equation}
we have 
\begin{equation}
0 < y-(2^k3-1) \leq  2^{k}3-1.\nonumber
\end{equation}
Let $A_1=(x,2^{k}3-1,s)$ and 
$A_2=(x,y-(2^k3-1),s)$.
If $A_1$ and $A_2$ are the same size,
$A_1, A_2 \in \mathcal{P}$. Then,
$L$ can eat the chocolate bar that contains less or equal number of $1$ than or to the other, and get a chocolate bar in $\mathcal{P}$. If $A_1$ is bigger than $A_2$, $L$ eats $A_2$ to get $A_1 \in \mathcal{P}$.\\
$\mathrm{(i.2.2)}$
Suppose that 
\begin{equation}
 y < 2^03-1 = 2.\nonumber
\end{equation}
Then, we have $y=1$, and we prove by a method that is similar to the one used in $\mathrm{(i.1)}$.\\
$\mathrm{(i.3)}$
Suppose that $x = 2^n 3-1$ for some $n \in \mathbb{Z}_{\geq0}$.\\
$\mathrm{(i.3.1)}$
Suppose that there exists $k \in \mathbb{Z}_{\geq0}$ such that
\begin{equation}
2^k4-1 < y < 2^{k+1}4-1.\nonumber
\end{equation}
Then we have 
\begin{equation}
0 < y-(2^k4-1) \leq  2^{k}4-1,\nonumber
\end{equation}
and the rest of the proof of this case is similar to the one in $\mathrm{(i.2.1)}$.\\
$\mathrm{(i.3.2)}$
Suppose that 
\begin{equation}
 y < 2^04-1 = 3.\nonumber
\end{equation}
If $y=1$, we use a method that is similar to the one used in $\mathrm{(i.1)}$.\\
Suppose that $ y= 2 = 2^03-1$.\\
$\mathrm{(i.3.2.1)}$
Suppose that $n \geq 1.$
Since
\begin{equation}
2^n3-1 < 2^n4-2,\nonumber
\end{equation}
we have
\begin{equation}
0 < 2^n3-1-(2^{n-1}4-1) < 2^{n-1}4-1.\nonumber
\end{equation}
Let $A_1=(2^{n-1}4-1,2,s)$ and $A_2=(2^n3-1-(2^{n-1}4-1),2,t)$ with $t=0,1.$
$L$ can eat $A_2$ to get $A_1 \in \mathcal{P}_c$.\\
$\mathrm{(i.3.2.2)}$
Suppose that $n=0.$ Then,
$A=(2,2,s)$, and we let $A_1=(2,1,0)$ and $A_2=(2,1,1)$. Then, we have $A=A_1 \cup A_2$, and 
$L$ can eat $A_1$ to get $A_2$.\\
$\mathrm{(i.4)}$. Suppose that $x = 2^n(2p+5)-1$ for some $n,p \in \mathbb{Z}_{\geq0}$.
Since $(x,y,s) \in \mathcal{N}$, by Definition \ref{partizanpposition}, we have the following 
two cases $\mathrm{(i.4.1)}$ and $\mathrm{(i.4.2)}$.\\
$\mathrm{(i.4.1)}$ Suppose that  there exists $k \in \mathbb{Z}_{\geq0}$
such that 
\begin{equation}
2^k(2p+5)-1 < y < 2^{k+1}(2p+5)-1.\nonumber
\end{equation}
Since $y \leq 2^{k+1}(2p+5)-2$, we have
\begin{equation}
0 < y-(2^k(2p+5)-1) \leq 2^k(2p+5)-1.  \nonumber
\end{equation}
Let $A_1=(x,2^k(2p+5)-1,s)$ and $A_2=(x,y-(2^k(2p+5)-1),t)$ with $t=0$ or $1$. 
If we have 
\begin{equation}
0 < y-(2^k(2p+5)-1) <  2^k(2p+5)-1,  \nonumber
\end{equation}
then by
Lemma \ref{morethanhalf} and Remark \ref{remarkbigsmall},
the number of 1 in $A_2$ is smaller than that of 1 in $A_1$. Therefore,
$L$ can eat $A_2$ to get $A_1 \in \mathcal{P}$.

If we have 
\begin{equation}
y-(2^k(2p+5)-1) =  2^k(2p+5)-1,  \nonumber
\end{equation}
then $A_1$ and $A_2$ are of the same size, and 
$A_1, A_2 \in \mathcal{P}$.
Player $L$ can eat $A_1$ or $A_2$ that contains less or equal $1$ than or to the other to get a chocolate bar in $\mathcal{P}$.\\
$\mathrm{(i.4.2)}$ Suppose that  
\begin{equation}
y < (2p+5)-1=2p+4.\label{smallerthan2p4}
\end{equation}
Since $y \in \mathbb{N}$, by Lemma \ref{allnatural}, 
we have the following cases $\mathrm{(i.4.2.1)}$, $\mathrm{(i.4.2.2)}$, $\mathrm{(i.4.2.3)}$ and $\mathrm{(i.4.2.4)}$.\\
$\mathrm{(i.4.2.1)}$
Suppose that there exist $k,q  \in \mathbb{Z}_{\geq0}$ such that 
\begin{equation}
 y = 2^k(2q+5)-1. \label{yequalkq}   
\end{equation}
By (\ref{smallerthan2p4}), there exists $u \in \mathbb{Z}_{\geq0}$ such that
\begin{equation}
\frac{2p+5}{2^{u+1}} \leq 2^k(2q+5) < \frac{2p+5}{2^u}.\nonumber
\end{equation}
Then, we have 
\begin{equation}
\frac{2p+5}{2} \leq 2^{k+u}(2q+5) < 2p+5,\nonumber
\end{equation}
and hence we have
\begin{equation}
p+3 \leq 2^{k+u}(2q+5) < 2p+5. \label{n2bigp3}
\end{equation}
By (\ref{n2bigp3}) we have 

\begin{align}
2^{k+u}(2q+5) & > p+2 \nonumber \\
& = 2p+5-(p+3) \nonumber \\
& \geq 2p+5-2^{k+u}(2q+5).\label{n2smalp2}
\end{align}
We fix $y$, and reduce the first coordinate $x=2^n(2p+5)-1.$
Let $A_1=(2^n(2^{k+u}(2q+5))-1,y,s)$ and $A_2=(2^n(2p+5 -2^{k+u}(2q+5)),y,t)$ with $t=0,1$.
By (\ref{yequalkq}),$A_1 \in \mathcal{P}$.
By (\ref{n2smalp2}) we have 
\begin{equation}
2^n(2^{k+u}(2q+5))-1 \geq 2^n(2p+5 -2^{k+u}(2q+5)).\nonumber
\end{equation}
If
\begin{equation}
2^n(2^{k+u}(2q+5))-1 >  2^n(2p+5 -2^{k+u}(2q+5)),\nonumber
\end{equation}
by Lemma \ref{morethanhalf} and Remark \ref{remarkbigsmall},
the number of 1 in $A_2$ is smaller than that of 1 in $A_1$. Since $A=A_1 \cup A_2$,
$L$ can get $A_1 \in \mathcal{P}$ by eating $A_2$.
If
\begin{equation}
2^n(2^{k+u}(2q+5))-1 =  2^n(2p+5 -2^{k+u}(2q+5)),\nonumber
\end{equation}
then we also have $A_2 \in \mathcal{P}$, and 
$L$ can choose $A_1$ or $A_2$ according to the number of $1$, and 
eat the one with a smaller number of $1$. If the number of $1$ is the same for both, $L$ can eat either of them.
In this way, she can get a chocolate bar in $\mathcal{P}$.\\
$\mathrm{(i.4.2.2)}$
Suppose that there exist $k \in \mathbb{Z}_{\geq0}$ such that 
$y = 2^k4-1$. Then, we prove by a method that is similar to the one used for $x=2^n4-1$
in $\mathrm{(i.2)}$.\\
$\mathrm{(i.4.2.3)}$
Suppose that there exist $k \in \mathbb{Z}_{\geq0}$ such that 
$y = 2^k3-1$. Then, we prove by a method that is similar to the one used for $x=2^n3-1$ in $\mathrm{(i.3)}$.\\
$\mathrm{(i.4.2.4)}$
Suppose that $y=1$. Then, we prove by a method that is similar to the one used for $x=1$ in $\mathrm{(i.1)}$.\\
$\mathrm{(ii)}$ We can prove the case for the case of Player $R$ by a method that is very similar to the one used for Player $L$ in $\mathrm{(i)}$.
\end{proof}

\begin{theorem}
We have the following $(i)$, $(ii)$, $(iii)$, and $(iv)$:\\
$(i)$ The set $\mathcal{P}$ is the set of $\mathcal{P}$-positions.\\
$(ii)$ The set $\mathcal{N}$ is the set of $\mathcal{N}$-positions.\\
$(iii)$ The set $\mathcal{L}$ is the set of $\mathcal{L}$-positions.\\
$(iv)$ The set $\mathcal{R}$ is the set of $\mathcal{R}$-positions.\\
\end{theorem}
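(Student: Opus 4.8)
The plan is to run a single strong induction on the area $xy$ of the chocolate bar $(x,y,s)$, using Lemma~\ref{howtodecide} (equivalently Table~\ref{decideoutcome}) as the engine: the outcome class of a position is completely determined by the outcome classes of its options, so once those are known the classification follows by reading off the appropriate cell of the table. Since cutting a bar and eating the smaller piece strictly decreases the area, the induction is well founded; the base cases are the three terminal positions $(0,0)$, $(1,1,0)$, $(1,1,1)$, which are directly checked to be a $\mathcal{P}$-position (the mover has no move and loses), an $\mathcal{L}$-position ($R$ cannot eat a white $1\times1$ and loses, while $L$ eats it and wins), and an $\mathcal{R}$-position, respectively; these agree with Definitions~\ref{partizanpposition}, \ref{defofL} and \ref{deofR}.

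For the inductive step fix $(x,y,s)$ with $xy\ge 2$. Every option of $(x,y,s)$ has strictly smaller area, so by the induction hypothesis its true outcome class coincides with the unique one of $\mathcal{P},\mathcal{N},\mathcal{L},\mathcal{R}$ to which it belongs; recall these four sets partition all positions by Definition~\ref{deofN}. Now split into four cases according to which of these sets contains $(x,y,s)$. \emph{Case $(x,y,s)\in\mathcal{P}$:} by Lemma~\ref{lemmaforP} (which itself uses Lemma~\ref{frompton}), $L$ always moves to $\mathcal{R}\cup\mathcal{N}$ and $R$ always moves to $\mathcal{L}\cup\mathcal{N}$; this is exactly the lower-right cell of Table~\ref{decideoutcome}, so $(x,y,s)$ is a $\mathcal{P}$-position. \emph{Case $(x,y,s)\in\mathcal{N}$:} by Lemma~\ref{lemmaforN}, $L$ can move to $\mathcal{P}\cup\mathcal{L}$ and $R$ can move to $\mathcal{P}\cup\mathcal{R}$; this is the upper-left cell, so $(x,y,s)$ is an $\mathcal{N}$-position. \emph{Case $(x,y,s)\in\mathcal{L}$:} since $xy\ge2$ we have $(x,y,s)\ne(1,1,0)$, so by parts $(i)$ and $(iii)$ of Lemma~\ref{decideLorR} the player $L$ can move to $\mathcal{L}\cup\mathcal{P}$ while $R$ always moves to $\mathcal{L}\cup\mathcal{N}$; this is the upper-right cell, so $(x,y,s)$ is an $\mathcal{L}$-position. \emph{Case $(x,y,s)\in\mathcal{R}$} is symmetric, using parts $(ii)$ and $(iv)$ of Lemma~\ref{decideLorR} and the lower-left cell. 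Combining the four cases with the base cases completes the induction, and claims $(i)$--$(iv)$ follow at once.

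The only real subtlety — and it is a place to be careful rather than a place where something deep happens — is the reconciliation of the ``wins immediately''/``loses immediately'' clauses appearing in Lemmas~\ref{decideLorR} and \ref{lemmaforP} with the move-based language of Table~\ref{decideoutcome}: one must confirm that the only positions at which a player wins or loses without moving are precisely the three base cases (together with the degenerate bars $(1,2,s),(2,1,s)$, whose moves are already pinned down explicitly inside Lemmas~\ref{lemmaforP} and \ref{lemmaforN}), so that in the inductive step the invoked lemmas genuinely supply moves of the stated kind into sets whose outcome classes are controlled by the induction hypothesis. Granting that bookkeeping, the argument reduces to a routine table lookup, since Lemma~\ref{howtodecide} states the cell conditions as equivalences. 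I expect no further obstacle, as all the combinatorial work (which cuts are legal, which piece must be eaten, and the arithmetic that the membership conditions are preserved or destroyed by a cut) has been isolated into Lemmas~\ref{morethanhalf}, \ref{lemmaforheight1}, \ref{decideLorR}, \ref{frompton}, \ref{lemmaforP} and \ref{lemmaforN}.
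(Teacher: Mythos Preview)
Your proposal is correct and follows essentially the same approach as the paper: both combine Lemma~\ref{howtodecide} with Lemmas~\ref{decideLorR}, \ref{lemmaforP}, and \ref{lemmaforN} to verify that each of the four candidate sets satisfies the corresponding row of Table~\ref{decideoutcome}. The paper's proof is a three-line citation of these lemmas, leaving implicit the standard induction (on, say, area) needed to pass from ``moves into the set $\mathcal{R}\cup\mathcal{N}$'' to ``moves into actual $\mathcal{R}$- or $\mathcal{N}$-positions''; you have simply made that induction and the base-case bookkeeping explicit.
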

\begin{proof}
By Lemma \ref{howtodecide} and  Lemma \ref{lemmaforP}, we have $(i)$,
and by  Lemma \ref{howtodecide} and Lemma \ref{lemmaforN}, we have $(ii)$.

By Lemma \ref{howtodecide} and  Lemma \ref{decideLorR}, we have $(iii)$ and $(iv)$.
\end{proof}

\begin{figure}[H]
\begin{minipage}[t]{0.5\textwidth}
\begin{center}
\includegraphics[height=3.5cm]{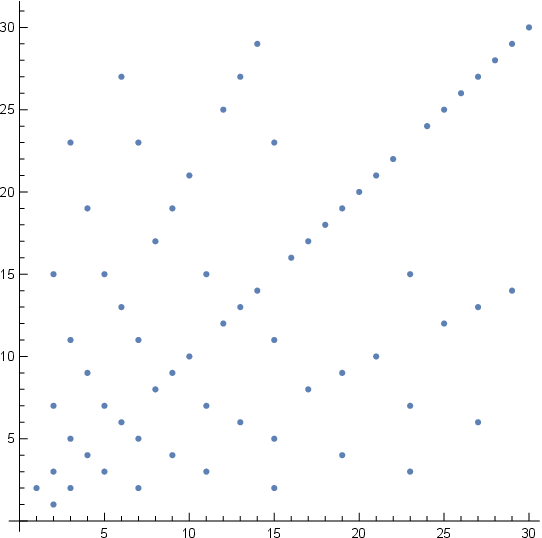}
\caption{P-positions of \\
a partizan chocolate bar  game.}
\label{parp}
\end{center}
\end{minipage}
\begin{minipage}[t]{0.5\textwidth}
\begin{center}
\includegraphics[height=3.5cm]{maxchocop.eps}
\caption{P-positions of \\
an impartial chocolate bar  game}
\label{maxp2}
\end{center}
\end{minipage}
\end{figure}
The set of  $\mathcal{P}$-positions of a  partizan chocolate bar game is different from, but is very similar to the set of 
$\mathcal{P}$-positions of an impartial chocolate bar game.
See Figure \ref{parp} and Figure \ref{maxp2}.

\section{The Prospect of Further Research}
The authors are studying three-dimensional chocolate games, and the impartial version is an easy generalization, but the partizan version is a challenging problem.
The authors have not managed to find formulas for the set of previous players' positions.

\section*{Acknowledgements}
This work was supported by JSPS KAKENHI Grant Number 23H05173.

We would like to thank Prof. R. J. Nowakowski for his useful advice, and we 
we would like to thank Editage (www.editage.com) for English language editing.

\end{document}